\documentclass[12pt]{amsart}

\usepackage{latexsym,amssymb,amsmath,graphics}
\usepackage[dvips]{graphicx}

\def\cc{{\mathcal C}}

\def\hh{{\mathcal H}}

\def\mm{{\mathcal M}}
\def\nn{{\mathcal N}}
\def\pp{{\mathcal P}}

%

\def\ffi{\varphi}
\def\eps{\varepsilon}
\def\dst{\displaystyle}

\renewcommand{\Im}{\mathrm{Im}\,}

\DeclareMathOperator{\vect}{span}
%
%

\def\C{{\mathbb{C}}}

\def\N{{\mathbb{N}}}

\def\Q{{\mathbb{Q}}}
\def\R{{\mathbb{R}}}
\def\S{{\mathbb{S}}}

\def\Z{{\mathbb{Z}}}

\def\e{{\mathbf{e}}}

%
%

\newcommand{\abs}[1]{{\left|{#1}\right|}}
\newcommand{\scal}[1]{{\left\langle{#1}\right\rangle}}

%
%

\newtheorem{lemma}{Lemma}[section]
\newtheorem{proposition}[lemma]{Proposition}
\newtheorem{theorem}[lemma]{Theorem}

\theoremstyle{definition}
\newtheorem{definition}[lemma]{Definition}
\newtheorem{example}[lemma]{Example}

\theoremstyle{remark}
\newtheorem{remark}[lemma]{Remark}

%
%
%
%

\begin{document}

\title[Phase retrieval for Herglotz functions]{The phase retrieval problem for solutions of the Helmholtz equation}

\author{Philippe Jaming \& Salvador P\'erez-Esteva}

\address{P.J. \,: Univ. Bordeaux, IMB, UMR 5251, F-33400 Talence, France.
CNRS, IMB, UMR 5251, F-33400 Talence, France.}
\email{Philippe.Jaming@math.u-bordeaux.fr}

\address{S.P.E.\,: Instituto de Matem\'{a}ticas, Unidad Cuernavaca\\
Universidad Nacional Aut\'{o}noma de M\'{e}xico\\
Cuernavaca\\ Morelos 62251\\ M\'{e}xico}
\email{spesteva@im.unam.mx}

\begin{abstract}
In this paper we consider the phase retrieval problem for Herglotz functions, that is, solutions of the Helmholtz equation
$\Delta u+\lambda^2u=0$ on domains $\Omega\subset\R^d$, $d\geq2$. In dimension $d=2$, if $u,v$ are two such solutions 
then $|u|=|v|$ implies that either $u=cv$ or $u=c\bar v$ for some $c\in\C$ with $|c|=1$. In dimension $d\geq3$,
the same conclusion holds under some restriction on $u$ and $v$:
either they are real valued or zonal functions or have non vanishing mean.
\end{abstract}

\subjclass{42B10}

\keywords{phase retrieval; Helmholtz equation}

\maketitle


\section{Introduction}
The phase retrieval problem consists in reconstructing a function from its modulus or the modulus of some
transform of it (frame coefficient, Fourier transform,...) and some structural information on the function ({\it e.g.}
to be compactly supported). This kind of problems occur in many scientific fields
such as microscopy, holography, crystallography, neutron radiography,
optical coherence tomography, optical design, radar signal processing ,
quantum mechanics to name a few. 
We refer to the books \cite{Hu,St}, the review articles \cite{KST,Mi,Fi,LBL} and to the introduction
of our previous paper \cite{Ja} for descriptions of various instances of this problems, some solutions to it 
(both theoretical and numerical) and for further references.

The problem can be split into two main questions:

-- design algorithms that allow to reconstruct at least one solution. 

-- obtain uniqueness results (up to obvious invariants of the problem like the multiplication by a constant phase factor).

After having long been ignored by mathematicians, recent progress on the algorithmic aspect of the problem \cite{CSV,WdAM}
has triggered a lot of attention to this problem. While the design of numerical algorithms allowing to reconstruct one
solution is of course essential, the task can only be complete once one is certain to reconstruct {\em all}
solutions of interest. This is generally not possible as long as uniqueness is not guarantied and plainly justifies
the second part of the problem. Uniqueness is also usefull in order to have stability results in presence of additive noise. In this paper, we will only deal with the uniqueness aspect of the problem.
More precisely, the phase retrieval problem is extremely common in optical sciences, among other reasons, this is due to the lack of sensitivity
of optical measurement instruments to phase. It turns out that optical signals are solution of partial differential equations
and our aim is to show that this information can be of some use in the phase retrieval problem.
 In our previous work \cite{Ja},
we considered solutions of the free Shr\"odinger equation. Our aim here is to pursue a similar study for
solutions of the Helmholtz equation $\Delta u+\lambda^2u=0$ in a domain $\Omega\subset\R^d$, $d\geq 2$ where by {\em domain} we mean an open connected subset of $\R^d$. Recall that this
equation is obtained by reducing the wave equation to monochromatic waves and that $|u|^2$ when $u$ is a solution
of this equation is the intensity of the monochromatic wave and does not vary with time.
Note also that a different version of the phase retrieval problem for solutions of the Helmholtz equation
has been recently studied in \cite{KS,Kl1,Kl2,KR2} and for the Shr\"odinger equation in \cite{KR} and references therein.

Finally, note that, up to a renormalization, we may restrict attention to the case $\lambda=1$. Up to a translation, we may also assume that $0\in\Omega$. We are thus concerned with the following problem:

\medskip

\noindent{\bf Phase Retrieval Problem for the Helmholtz Equation.}
{\sl Let $d\geq2$ and $0\in\Omega\subset\R^d$ be a domain. Let $u,v$ be two solutions of the Helmholtz equation on $\Omega$ 
$$
\Delta u+u=0.\leqno(H)
$$
Does $|v|=|u|$ imply that $v=cu$ or $v=c\bar u$ for some $c\in\C$ with $|c|=1$.}

\medskip

Of course, if $v=cu$ or $v=c\bar u$ then $v$ is also a solution of $(H)$ and $|v|=|u|$. We will say that
$v$ is a trivial solution of the phase retrieval solution for $u$. This problem, as stated still eludes us. Our aim here is to
show that, in many instances, the problem has only trivial solutions.

Our main result is then the following:

\medskip

\noindent{\bf Main Theorem.} {\sl 
Let $d\geq2$ and $0\in\Omega\subset\R^d$ be a domain. Let $u,v$ be two solutions of the Helmholtz equation on $\Omega$ 
$$
\Delta u+u=0\quad,\quad\Delta v+v=0
$$
on $\Omega$.
Assume one of the following holds

--- $u$ and $v$ are real valued;

--- $u$ has non-zero mean;

--- The dimension is $d=2$;

--- $d\geq 3$ and $u,v$ are zonal functions.

Then there exists a constant $c\in\C$ such that
either $v=cu$ on $\Omega$ or $v=c\bar u$ on $\Omega$ {\it i.e.} $v$ is a trivial solution of the phase retrieval problem.}

\medskip

Recall that a zonal function is a function of the form $\ffi(\scal{x,x_0})$. Such functions are sometimes also called ridge functions.

\medskip

The remaining of this paper is organized as follows: in the next section, we gather all information we need about spherical harmonics and
Bessel function and then reformulate the problem in terms of spherical harmonic coefficients. The remaining of the paper is then devoted to the proofs of the various statements of the main theorem,
Section \ref{sec:d2} is devoted to the $2$-dimensional cases and Section \ref{sec:d3} to the other three cases.

\section{Preliminaries}

\subsection{Notations}

Throughout this paper, $d$ is an integer, $d\geq 2$.
The Euclidean norm in $\R^d$ is denoted by $\abs{x}=\bigl(x_1^2+\cdots+x_d^2\bigr)^{1/2}$ and we set $\S^{d-1}=\{x\in\R^d\,:\ \abs{x}=1\}$.
We denote the standard basis of $\R^d$ by $\e_1,\ldots,\e_d$.

We denote by $\N_0$ the set of non-negative integers, $\N=\N_0\cup\{0\}$. 
For $\alpha\in\N_0^d$ we use the standard multi-index notation,
$\abs{\alpha}=\alpha_1+\cdots+\alpha_d$ and
$\partial^\alpha=\partial_{x_1}^{\alpha_1}\cdots\partial_{x_d}^{\alpha_d}$.
The Laplace operator is defined by $\Delta=\dst\sum_{j=1}^d\partial_{x_j}^2$.
%

\subsection{Spherical harmonics}

Let us here gather some information on spherical harmonics as can be found in many books in harmonic analysis.
We will here take the notations from \cite[Chapter 1]{FD}.

We denote by $\pp_m^d$ the space of homogeneous polynomials of degree $m$ in $\R^d$ and
$$
\hh_m^d=\{P\in\pp_n^d\,:\ \Delta P=0\}
$$
the space of harmonic homogeneous polynomials of degree $n$ in $\R^d$. Recall that this space has dimension
$N(m)=\dst{m+d-1\choose m}-{m+d-3\choose m-2}$
with the standard convention that the second term vanishes for $m=0$ and $
m=1$. In particular $N(0)=1$, and for $m\geq 1$, $N(m)=2$ when $d=2$ while $N(m)=2m+1$ when $d=3$

We will not distinguish homogeneous polynomials and their restriction to $\S^{d-1}$.
In particular, $\hh_m^d$ and $\hh_n^d$ are orthogonal subspaces of $L^2(\S^{d-1})$ when $n\not=m$.

Recall the following definition:

\begin{definition}
	A function $f$ on $\R^d$ or $\S^{d-1}$ is said to be zonal with respect to some $\zeta_0\in\S^{d-1}$ if there exists a function $f_0$ on $\R$ such that
	$f(x)=f_0(\scal{x,\zeta_0})$.
\end{definition}

Such functions can be described in terms of a so-called {\em zonal basis}:
$$
Y_m^j(\theta)=C_m^{d/2-1}(\scal{\theta,\zeta_m^j})
$$
where $\{\zeta_m^j,m\geq 0,j=1,\ldots,N(m)\}\subset\S^{d-1}$, $\zeta_m^1=\zeta_0$ for every $m$ and 
$C_m^\lambda$ is the Gegenbauer polynomial of degree $m$ and parameter $\lambda$ are given by
$$
C_m^\lambda(z)=\sum_{k=0}^{\lfloor m/2\rfloor} (-1)^k\frac{\Gamma(m-k+\lambda)}{\Gamma(\lambda)k!(m-2k)!}(2z)^{m-2k}.
$$
For the existence of such a basis, see {\it e.g.} \cite[Theorem 3.3]{FD}. A zonal function
has then an expension in $L^2(\S^{d-1})$ in terms of $(Y_m^1)_{m\geq 0}$ only: $f$ is zonal
with respect to $\zeta_0$ if and only if $f=\sum_{m\geq 0}a_m(f)Y_m^1$.
A zonal basis need not be orthogonal however, the orthogonality property of the $\hh_m^d$'s show that there is no convergence issue in $L^2(\S^{d-1})$. Those basis have the very desirable property for our problem to be real valued.
We also recall that the zonal basis is extended to $\R^d$  by homogeneity, 
$Y_m^j(x)=\abs{x}^mC_m^{d/2-1}\left(\scal{\frac{x}{\abs{x}},\zeta_m^j}\right)$.

We will need the following simple Lemma:

\begin{lemma}
	\label{lem:zonlinind}
	Let $m,n,d$ be integers, $m,n\geq 1$, $d\geq3$ $\lambda=d/2-1$, $a,b\in\C\setminus\{0\}$, $\zeta_1,\zeta_2\in\S^{d-1}$.
	Assume that $aC_m^\lambda(\scal{\theta,\zeta_1})^2=bC_n^\lambda(\scal{\theta,\zeta_2})^2$
	then $m=n$, $\zeta_2=\pm\zeta_1$ and $a= b$.
\end{lemma}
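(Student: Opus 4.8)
The plan is to read the hypothesis as an identity of functions of $\theta\in\S^{d-1}$ and to split according to whether $\zeta_1,\zeta_2$ are linearly dependent or not. Two elementary facts about Gegenbauer polynomials will be used throughout: since $\lambda=d/2-1\geq1/2>0$ for $d\geq3$, the explicit formula shows that each $C_k^\lambda$ is a genuine polynomial of degree $k$ (its leading coefficient is nonzero), and since every monomial occurring in $C_k^\lambda$ has degree of the same parity as $k$, one has $C_k^\lambda(-z)=(-1)^kC_k^\lambda(z)$, whence $C_k^\lambda(-t)^2=C_k^\lambda(t)^2$.

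First I would dispose of the dependent case. If $\zeta_2=\pm\zeta_1$, then $\scal{\theta,\zeta_2}=\pm\scal{\theta,\zeta_1}$, so by the parity remark the hypothesis becomes $aC_m^\lambda(t)^2=bC_n^\lambda(t)^2$ with $t=\scal{\theta,\zeta_1}$. As $\theta$ ranges over $\S^{d-1}$, $t$ takes every value in $[-1,1]$, so this is an identity of polynomials in one real variable. Comparing degrees ($2m$ against $2n$) gives $m=n$, and dividing by the nonzero polynomial $C_m^\lambda(t)^2$ yields $a=b$, as claimed.

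It remains to rule out the case where $\zeta_1,\zeta_2$ are linearly independent; this is the heart of the argument and the step where $d\geq3$ is genuinely used (in dimension $2$ the image below would be only a curve). Set $V=\vect\{\zeta_1,\zeta_2\}$ and let $P$ be the orthogonal projection onto $V$, so that $\scal{\theta,\zeta_i}=\scal{P\theta,\zeta_i}$. Because $d\geq3$ we have $\dim V^\perp\geq1$, so every $w\in V$ with $\abs{w}\leq1$ lifts to a point $\theta=w+\sqrt{1-\abs{w}^2}\,\nu\in\S^{d-1}$ for a unit vector $\nu\in V^\perp$, with $P\theta=w$; hence $P(\S^{d-1})$ is the full unit disk of the plane $V$. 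The linear map $w\mapsto\bigl(\scal{w,\zeta_1},\scal{w,\zeta_2}\bigr)$ of $V\cong\R^2$ into $\R^2$ has trivial kernel (a vector of $V$ orthogonal to both $\zeta_1$ and $\zeta_2$ is orthogonal to $V$, hence zero), so it is an isomorphism, and I conclude that $\theta\mapsto\bigl(\scal{\theta,\zeta_1},\scal{\theta,\zeta_2}\bigr)$ has image with nonempty interior $U\subset\R^2$.

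On $U$ the identity $aC_m^\lambda(t_1)^2=bC_n^\lambda(t_2)^2$ holds as an identity in the two independent variables $(t_1,t_2)$. Differentiating in $t_2$, the left side is independent of $t_2$ and so its derivative vanishes, forcing $bC_n^\lambda(t_2)\,(C_n^\lambda)'(t_2)\equiv0$ on an interval of values of $t_2$. Since $b\neq0$ and $n\geq1$, the product $C_n^\lambda(C_n^\lambda)'$ is a nonzero polynomial of degree $2n-1$, which cannot vanish on an interval, a contradiction. Therefore $\zeta_1,\zeta_2$ must be linearly dependent, and the dependent case treated above then gives $\zeta_2=\pm\zeta_1$, $m=n$ and $a=b$, completing the proof. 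The only delicate point is the geometric openness of $U$, where the assumption $d\geq3$ enters decisively; everything else is degree and parity bookkeeping.
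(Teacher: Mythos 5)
Your proof is correct, but it takes a genuinely different route from the paper's. The paper treats the dependent and independent cases uniformly: it completes $\zeta_1$ to an orthonormal triple $\zeta_1,\zeta_3,\zeta_4$ (this is where $d\geq3$ enters there) with $\zeta_2=\cos\ffi\,\zeta_1+\sin\ffi\,\zeta_3$, parametrizes $\theta=\cos s\,(\cos t\,\zeta_1+\sin t\,\zeta_3)+\sin s\,\zeta_4$, and reduces the hypothesis to $aC_m^\lambda(\cos t\cos s)^2=bC_n^\lambda(\cos(t-\ffi)\cos s)^2$; two applications of the linear independence of powers of $\cos$ (first in $s$, isolating the top Gegenbauer coefficient, then in $t$) give $m=n$, and expanding $\cos^{2m}$ into exponentials yields $a=be^{-i(2m-2j)\ffi}$ for all $j=0,\dots,2m$, forcing simultaneously $a=b$ and $\ffi\in\{0,\pi\}$. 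You instead dispose of the independent case outright by a geometric argument --- the joint map $\theta\mapsto\bigl(\scal{\theta,\zeta_1},\scal{\theta,\zeta_2}\bigr)$ has open image when $d\geq3$, so the identity would force the nonconstant polynomial $C_n^\lambda(t_2)^2$ to be locally constant, which is absurd --- and then handle the dependent case by parity and degree comparison in one variable. Your version is more modular and less computation-heavy: it uses nothing about Gegenbauer polynomials beyond nonvanishing of the leading coefficient and the parity relation, so it would work verbatim for any family $\{P_k\}$ of polynomials with $\deg P_k=k$ and $P_k(-t)=\pm P_k(t)$; it also makes transparent why $n\geq1$ is needed (for $n=0$ the contradiction in the independent case evaporates, matching the paper's remark). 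What the paper's computation buys in exchange is explicit quantitative information: it exhibits exactly how the constants would have to transform under a rotation by the angle $\ffi$, rather than just excluding $\ffi\notin\{0,\pi\}$ abstractly. Both arguments use $d\geq3$ in an essential and analogous way --- the paper to find $\zeta_4$ orthogonal to the plane of $\zeta_1,\zeta_2$, you to lift the full unit disk of that plane into $\S^{d-1}$.
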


\begin{proof}
Let $\zeta_3,\zeta_4\in\S^{d-1}$ be such that 
\begin{enumerate}
\renewcommand{\theenumi}{\roman{enumi}}
	\item $\zeta_1,\zeta_3,\zeta_4$ are form an orthonormal basis of its span;
	\item $\zeta_2\in\vect\{\zeta_1,\zeta_3\}$.
\end{enumerate}

Write $\zeta_2=\cos\ffi\zeta_1+\sin\ffi\zeta_3$ with $\ffi\in[0,2\pi)$,
so that we want to show that $\ffi=0$ or $\ffi=\pi$.

Let $\theta=\cos s\bigl(\cos t\zeta_1+\sin t\zeta_3\bigr)+\sin s\zeta_4\in\S^{d-1}$ then
$\scal{\theta,\zeta_1}=\cos t\cos s$ while $\scal{\theta,\zeta_2}=\cos(t-\ffi)\cos s$
and $aC_m^\lambda(\scal{\theta,\zeta_1})^2=bC_n^\lambda(\scal{\theta,\zeta_2})^2$ reduces to
\begin{equation}
\label{eq:geg}
aC_m^\lambda\bigl(\cos t\cos s\bigr)^2=bC_n^\lambda\bigl(\cos(t-\ffi)\cos s\bigr)^2.	
\end{equation}

As powers of the function $\cos x$ are linearly independent, it remains to look
at the highest order term in $\cos s$ to see that \eqref{eq:geg} implies
\begin{multline*}
a\left(\frac{2^m\Gamma(m+\lambda)}{m!}\right)^2\cos^{2m} t
=b\left(\frac{2^n\Gamma(n+\lambda)}{n!}\right)^2\cos^{2n} (t-\ffi).
\end{multline*}

Using again the linear independence of powers of $\cos$, this implies that
$m=n$ and thus reduces to
$$
a\cos^{2m} t=b\cos^{2m} (t-\ffi).
$$
As
$$
2^{2m}\cos^{2m} t=(e^{it}+e^{-it})^{2m}=
\sum_{j=0}^{2m}{2m\choose j}e^{i(2m-2j)t}
$$
we get $a=be^{-i(2m-2j)\ffi}$ for $j=0,\ldots,2m$.
Therefore $a=b$ and $\ffi=0$ or $\ffi=\pi$ that is $\zeta_2=\pm\zeta_1$. 
\end{proof}

\begin{remark}
Note that if $m=0$ or if $a=0$, we get $a=b$ but, of course, we can not conclude that $\zeta_2=\pm\zeta_1$.
\end{remark}

From e.g. \cite[Theorem 1.9]{FD}, we can construct another real valued basis for $d\geq 3$.
If $\alpha\in\N_0^d$ 
 is obtained as follows:
$\alpha\in\N_0^d$, $\abs{\alpha}=m$
\begin{equation}
	\label{eq:specspherharm}
p_\alpha(x)=\frac{(-1)^m}{2^m\begin{pmatrix} \frac{d-2}{2}\end{pmatrix}_m}\abs{x}^{d-2+2m}\partial^\alpha \abs{x}^{-d+2}
=x^\alpha+\abs{x}^2q_\alpha(x)
\end{equation}
with $q_\alpha$ an homogeneous polynomial of degree $m-2$ (when $m=0$ or 1, $q_\alpha=0$). Then
$$
\{p_\alpha\,:\ \alpha\in\N_0^d,\ \abs{\alpha}=m,\ \alpha_d=0\mbox{ or }1\}
$$
is a basis of $\hh_m^d$, that is not orthonormal.

For $j=0$ or $j=1$, let us write 
$\nn_m^j=\{\alpha\in\N_0^d,\ \abs{\alpha}=m,\ \alpha_d=j\}$,
and write $\nn_m=\nn_m^0\cup\nn_m^1$.

\begin{lemma}
\label{lem:specbaslinind}
For every integer $m\geq 0$, the set of polynomials
$$
\{p_\alpha^2\,:\ \alpha\in\N_0^d,\ \abs{\alpha}=m,\ \alpha_d=0\mbox{ or }1\}
$$
is linearly independent.
\end{lemma}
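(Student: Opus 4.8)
The plan is to assume a vanishing combination $\sum_{\alpha\in\nn_m}c_\alpha p_\alpha^2\equiv 0$ and to deduce that all $c_\alpha=0$. The natural starting point is the decomposition $p_\alpha^2=x^{2\alpha}+\abs{x}^2 r_\alpha$, with $r_\alpha=2x^\alpha q_\alpha+\abs{x}^2q_\alpha^2$, coming from $p_\alpha=x^\alpha+\abs{x}^2q_\alpha$; since the monomials $x^{2\alpha}$, $\alpha\in\nn_m$, are pairwise distinct, one is tempted to read off coefficients. I would first record the structural simplification that whenever $\alpha$ is squarefree (all $\alpha_i\le 1$) the monomial $x^\alpha$ is already harmonic, so $p_\alpha=x^\alpha$ and $p_\alpha^2=x^{2\alpha}$ is a single monomial; these indices split off immediately, leaving only the indices with some $\alpha_i\ge 2$ to be handled. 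Note also that each $p_\alpha^2$ involves only even monomials, so the relevant coefficients live on the even exponents $x^{2\beta}$.

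The core of the argument is then to extract, for each $\beta\in\nn_m$, the coefficient of $x^{2\beta}$, which produces a square homogeneous linear system $\sum_{\alpha}M_{\beta\alpha}c_\alpha=0$, where $M_{\beta\alpha}$ denotes the coefficient of $x^{2\beta}$ in $p_\alpha^2$. Since $\abs{\nn_m}=N(m)=\dim\hh_m^d$, the matrix $M$ is $N(m)\times N(m)$, and it suffices to prove that $M$ is invertible. Its entries are in principle computable from the explicit Gauss expansion $p_\alpha=\sum_{k\ge 0}b_k\abs{x}^{2k}\Delta^k x^\alpha$ (with $b_0=1$ and the remaining $b_k$ the known alternating Gegenbauer-type constants), and the squarefree indices contribute standard basis columns, so the whole question reduces to the block of $M$ indexed by the non-squarefree multi-indices.

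The main obstacle is precisely this invertibility, and the crucial point is that it cannot be obtained by any of the cheap reductions. Reducing the identity modulo $\abs{x}^2$ yields only $\abs{x}^2\mid\sum_\alpha c_\alpha x^{2\alpha}$, which is strictly weaker: the relation $x_j^2\abs{x}^2=\sum_i x_i^2x_j^2$ already exhibits a nonzero combination of the $x^{2\alpha}$ that is divisible by $\abs{x}^2$. Equivalently, passing to the top harmonic component is fatal, since $\Pi(p_\alpha^2)=\Pi(x^{2\alpha})$ (the harmonic projection kills $\abs{x}^2 r_\alpha$) and these projections are genuinely linearly dependent. Thus the proof must use the full $\abs{x}^2$-adic expansion of $p_\alpha^2$, i.e. all of its harmonic components rather than the leading one alone. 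Concretely I would establish $\det M\neq 0$ by exploiting the sign pattern of the coefficients $b_k$ together with the residual $S_{d-1}$ symmetry permuting $x_1,\dots,x_{d-1}$, which block-diagonalises $M$ along isotypic components and reduces the determinant to a short list of explicit scalar factors, or alternatively by an induction on $m$ using the derivative relations between the $p_\alpha$'s. I expect verifying this non-degeneracy to be the only delicate step, the remainder being bookkeeping.
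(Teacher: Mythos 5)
Your preparatory observations are correct and, in fact, sharper on one point than the paper itself: for squarefree $\alpha$ one indeed has $p_\alpha=x^\alpha$, and your relation $x_j^2\abs{x}^2=\sum_i x_i^2x_j^2$ (all of whose exponents lie in $\nn_2$ when $j<d$) really does show that the monomials $x^{2\alpha}$, $\alpha\in\nn_m$, become linearly dependent modulo $\abs{x}^2$, so neither reduction mod $\abs{x}^2$ nor passage to the top harmonic component can prove the lemma. For comparison, the paper's own proof expands $p_\alpha^2=x^{2\alpha}+2x^\alpha\abs{x}^2q_\alpha+\abs{x}^4q_\alpha^2$ and eliminates the coefficients in two rounds (first $\alpha\in\nn_m^0$, then $\alpha\in\nn_m^1$) by asserting that the block $\sum_{\alpha\in\nn_m^0}\lambda_\alpha x^{2\alpha}$ is ``clearly linearly independent from all other terms'' because $x^{2\e_d}$ divides the latter; that divisibility covers the monomials with $\alpha\in\nn_m^1$ but not the cross terms $x^\alpha\abs{x}^2q_\alpha$, and your example, rewritten as $x_1^2\abs{x}^2-x_1^2x_d^2=\sum_{i<d}x_1^2x_i^2$, shows that the asserted independence of spans is not clear at all. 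So your instinct about where the real difficulty sits is the right one.

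The problem is that your proposal stops exactly there. Recasting the lemma as ``$\det M\neq0$'', where $M_{\beta\alpha}$ is the coefficient of $x^{2\beta}$ in $p_\alpha^2$, is a reformulation, not a proof: the entire content of the lemma now lives in that determinant, and for it you only list candidate strategies (sign patterns of the $b_k$, the $S_{d-1}$-symmetry, induction on $m$) without carrying any of them out --- you say yourself that you merely ``expect'' this nondegeneracy to hold. Two further cautions. First, your square subsystem keeps only the coefficient equations indexed by $\beta\in\nn_m$ and discards all the others (for instance those of $x^{2\beta}$ with $\beta_d\geq2$, which is precisely the equation that settles the case $d=4$, $m=2$ most quickly); invertibility of $M$ is therefore a priori a stronger claim than the lemma, and if $M$ were singular your route would fail even though the lemma could still be true. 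Second, and more seriously, no soft or purely qualitative argument can finish the job: already for $d=4$, $m=2$ the symmetric square of $\hh_2^4$ has dimension $45$ while $\pp_4^4$ has dimension $35$, so the multiplication map $h\otimes h'\mapsto hh'$ has a nontrivial kernel, and diagonalizing any nonzero kernel element (spectral theorem in the $L^2(\S^{d-1})$ inner product) produces an orthonormal basis of $\hh_2^4$ whose squares \emph{are} linearly dependent. The lemma is thus a quantitative fact about the particular basis \eqref{eq:specspherharm}, true only because of the specific numerical coefficients hidden in the $q_\alpha$ --- one can verify $\det M\neq0$ by hand for $d=3,4$, $m=2$, and the computation genuinely uses those numbers --- and a completed proof must engage with them. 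As it stands, your proposal identifies the obstruction but does not overcome it.
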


\begin{proof}
Note that $p_\alpha^2=x^{2\alpha}+2x^\alpha \abs{x}^2q_\alpha(x)+\abs{x}^4q_\alpha(x)^2$. Assume that 
\begin{equation}
\label{eq:linindpalph}
\sum_{\alpha\in\nn_m}\lambda_\alpha p_\alpha^2=
\sum_{\alpha\in\nn_m}\lambda_\alpha x^{2\alpha}
+2\abs{x}^2\sum_{\alpha\in\nn_m}\lambda_\alpha x^\alpha q_\alpha
+\abs{x}^4\sum_{\alpha\in\nn_m}\lambda_\alpha q_\alpha^2
=0.
\end{equation}
In this sum,  the terms $\dst\sum_{\alpha\in\nn_m^0}\lambda_\alpha x^{2\alpha}$ 
are clearly linearly independent from from all other terms (since $x^{2\e_d}$ factors in them),
thus $\dst\sum_{\alpha\in\nn_m^0}\lambda_\alpha x^{2\alpha}=0$
and finally $\lambda_\alpha=0$ if $\alpha_d\in\nn_m^0$. But then \eqref{eq:linindpalph} reduces to
$$
x^{2\e_d}\sum_{\alpha\in\nn_m^1}\lambda_\alpha x^{2\alpha-2\e_d}
+2\abs{x}^2x^{\e_d}\sum_{\alpha\in\nn_m^1}\lambda_\alpha x^{\alpha-\e_d} q_\alpha
+\abs{x}^4\sum_{\alpha\in\nn_m^1}\lambda_\alpha q_\alpha^2=0.
$$
Again, the first term is linearly independent from the two others, thus
$\dst\sum_{\alpha\in\nn_m^1}\lambda_\alpha x^{2\alpha-2\e_d}
=0$ and finally $\lambda_\alpha=0$ for $\alpha\in\nn_m^1$ as well.
\end{proof}

%

\subsection{Bessel functions}

Here we gather some basic facts about Bessel functions (of integral order) that can be found {\it e.g.} in \cite{AS}
and \cite{Wa}.

Let $n\geq 0$ be an integer and $\alpha>0$. Recall that the Bessel function $J_{n+a}$ can be defined via the power series
$$
J_{n+\alpha}(r)=\left(\frac{r}{2}\right)^{n+\alpha}\sum_{k=0}^\infty \frac{(-1)^k}{k!\Gamma(n+\alpha+k)}\left(\frac{r}{2}\right)^{2k}.
$$
Alternatively
$$
J_0(r)=\frac{1}{\pi}\int_0^\pi\cos(r\sin \theta)\,\mbox{d}\theta
$$
and, for $n\geq 1$,
$$
J_n(r)=\frac{ 2r^n}{\sqrt{\pi}2^n\Gamma(n+1/2)}\int_0^1 (1-t^2)^{n-1/2}\cos (rt)\,\mbox{d}t.
$$
Finally, recall that for a negative integer $n<0$, we may define $J_{n}=(-1)^nJ_{-n}$.

From these expressions, one immediately deduces that $|J_0(r)|\leq 1$ and, for $n\in\Z\setminus\{0\}$
$$
|J_{n+\alpha}(r)|\leq\frac{ 2}{\sqrt{\pi}\Gamma(n+\alpha+1/2)}\left(\frac{r}{2}\right)^{n+\alpha}.
$$
In particular, if $(a_n)\in\ell^2(\Z)$, the series
$$
\sum_{n\in\Z} a_nJ_n(r)
$$
is uniformly convergent over every bounded interval. Moreover, all formal computations that we will use are directly justified with these estimaes.

Next, if $n,m\in\N_0$, then ({\it see} \cite[Formula 9.1.14]{AS}, \cite[p 147]{Wa})
\begin{multline}
\label{eq:prodbessel}
J_{n+\alpha}(r)J_{m+\alpha}(r)
=\left(\frac{r}{2}\right)^{n+m+2\alpha} \\
\sum_{k=0}^\infty\frac{(-1)^k\Gamma(n+m+2a+2k+1)}{\Gamma(n+a+k+1)\Gamma(m+a+k+1)\Gamma(n+m+2a+k+1)}\left(\frac{r}{2}\right)^{2k}.
\end{multline}
As a consequence, $J_n(r)J_m(r)\sim r^{n+m+2\alpha}$ when $r\to 0$. 
Moreover, using the pointwise bound of $J_n$, we see that if $(c_{n,m})_{n,m\geq0}$ is of moderate growth then
$$
r^{-2\alpha}\sum_{n,m\geq 0}c_{n,m}J_{n+\alpha}(r)J_{m+\alpha}(r)
$$
is holomorphic in a neighborhood of $0$. Moreover if, for some $\eta>0$ and for $0\leq r<\eta$,
$$
\sum_{n,m\geq0} c_{n,m}J_{n+\alpha}(r)J_{m+\alpha}(r)=0
$$
then, for every $j\geq0$, $\sum_{n+m=j}c_{n,m}J_{n+\alpha}(r)J_{m+\alpha}(r)=0$.
But then, using \cite[p150]{Wa}:
$$
J_{n+\alpha}(r)J_{m+\alpha}(r)=\frac{2}{\pi}\int_0^{\pi/2}J_{n+m+2\alpha}(2r\cos\theta)\cos(n-m)\theta\,\mbox{d}\theta
$$
we have
\begin{eqnarray*}
0&=&\int_0^{\pi/2}J_{j+2\alpha}(2r\cos\theta)\left(\sum_{n=0}^jc_{n,j-n}\cos(2n-j)\theta\right)\,\mbox{d}\theta\\
&=&r^{j+2\alpha}\sum_{k=0}^\infty \frac{(-1)^k}{k!\Gamma(j+2\alpha+k)!}r^{2k}\\
&&\qquad\qquad\times
\int_0^{\pi/2}\left(\sum_{n=0}^jc_{n,j-n}\cos(2n-j)\theta\right)\cos^{j+2k}\theta\,\mbox{d}\theta.
\end{eqnarray*}
It follows that, for every $j,k\geq 0$,
$$
\sum_{n=0}^jc_{n,j-n}\int_0^{\pi/2}\cos(2n-j)\theta\cos^{j+2k}\theta\,\mbox{d}\theta=0
$$
that we rewrite
\begin{equation}
\label{eq:base}
\int_0^{\pi/2}\left(\cos^j\theta\sum_{0\leq n\leq j/2}\frac{c_{n,j-n}+c_{j-n,n}}{2}\cos(2n-j)\theta\right)(\cos^2\theta)^k\,\mbox{d}\theta=0.
\end{equation}
We will now use the following simple lemma:

\begin{lemma}\label{lem:dens}
Let $f\,:[a,b]\to[0,1]$ be $\cc^1$, strictly monotonic and onto, then $\{f^k\}_{k\in\N}$ is dense
in $L^2[a,b]$.
\end{lemma}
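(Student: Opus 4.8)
The plan is to recognize that the closed linear span of $\{f^k\}_{k\in\N}$ in $L^2[a,b]$ is exactly the $L^2$-closure of the algebra $\aa$ of all polynomials in $f$, and then to show that $\aa$ is already dense by invoking the Stone--Weierstrass theorem. Since $[a,b]$ is compact and $f$, being $\cc^1$, is continuous, every power $f^k$, and hence every element of $\aa$, belongs to the space $C[a,b]$ of continuous functions.

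First I would verify the two hypotheses of Stone--Weierstrass for $\aa$ regarded as a subalgebra of $C[a,b]$. It contains the constants, because $f^0\equiv 1\in\aa$, and it separates the points of $[a,b]$, because $f$ is strictly monotonic and therefore injective: if $x\neq y$ then $f(x)\neq f(y)$. The (real) Stone--Weierstrass theorem then yields that $\aa$ is dense in $C[a,b]$ for the uniform norm $\norm{\cdot}_\infty$; the complex version follows by approximating real and imaginary parts separately, which is what we need since the coefficients arising in \eqref{eq:base} are complex.

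Next I would transfer uniform density to $L^2$ density. As the interval is bounded, for $g\in C[a,b]$ we have $\norm{g}_{L^2}\leq (b-a)^{1/2}\norm{g}_\infty$, so uniform approximation entails $L^2$ approximation; thus $\aa$ is $L^2$-dense in $C[a,b]$. Since $C[a,b]$ is itself dense in $L^2[a,b]$, chaining the two density statements shows that $\aa=\vect\{f^k\}$ is dense in $L^2[a,b]$, which is the assertion of the lemma.

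I do not expect any serious obstacle: the statement is an essentially immediate consequence of Stone--Weierstrass once one notices that strict monotonicity is precisely the separation-of-points hypothesis. The only points requiring a little care are formal, namely interpreting ``the family $\{f^k\}$ is dense'' as ``its linear span is dense'' and passing from the real to the complex setting. In fact neither the $\cc^1$ regularity nor surjectivity onto $[0,1]$ is needed---mere continuity and injectivity of $f$ suffice---but these extra hypotheses are harmless and are satisfied by the map $f(\theta)=\cos^2\theta$ on $[0,\pi/2]$ to which the lemma is applied in \eqref{eq:base}.
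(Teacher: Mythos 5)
Your argument is correct, but it takes a genuinely different route from the paper's. You exploit the fact that the linear span of $\{f^k\}_{k\in\N}$ is an algebra (it is closed under multiplication since $f^jf^k=f^{j+k}$), contains the constants ($f^0=1$), and separates points because a strictly monotonic $f$ is injective; Stone--Weierstrass then gives density in $\bigl(C[a,b],\norm{\cdot}_\infty\bigr)$, and the two elementary facts $\norm{g}_{L^2}\leq(b-a)^{1/2}\norm{g}_\infty$ and ``$C[a,b]$ is dense in $L^2[a,b]$'' conclude. The paper instead argues by duality: by Hahn--Banach it is enough to show that any $\ffi\in L^2[a,b]$ with $\int_a^b\ffi(s)f^k(s)\,\mathrm{d}s=0$ for all $k$ vanishes; the change of variables $t=f(s)$ converts these integrals into the moments $\int_0^1\psi(t)t^k\,\mathrm{d}t$ of $\psi=\bigl(\ffi/f'\bigr)\circ f^{-1}$, and completeness of the monomials in $L^2[0,1]$ gives $\psi=0$, hence $\ffi=0$. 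This is exactly where the hypotheses $\cc^1$ and onto enter, and, as you note, your route does not need them: continuity and injectivity suffice. Your route also buys something more substantive than weaker hypotheses: it avoids the Jacobian factor $1/f'\circ f^{-1}$ entirely. In the paper's own application, $f(\theta)=\cos^2\theta$ on $[0,\pi/2]$ has $f'$ vanishing at both endpoints, so the paper's $\psi$ is a priori only in $L^1[0,1]$ rather than $L^2[0,1]$, and concluding $\psi=0$ from vanishing moments then requires the (standard, but unstated) $L^1$ version of the moment argument via uniform approximation by polynomials. Your Stone--Weierstrass proof sidesteps this subtlety altogether, at the cost of invoking a bigger theorem; the paper's proof is quicker once one accepts the completeness of $\{t^k\}$ in $L^2[0,1]$, to which it reduces everything.
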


\begin {proof}
According to Hahn-Banach's, it is enough to show that, if $\ffi\in L^2[a,b]$ is such that
\begin{equation}
\label{eq:HB}
\int_a^b\ffi(s)f^k(s)\,\mbox{d}s=0
\end{equation}
for every $k\geq0$ then $\ffi=0$. But
$$
\int_a^b\ffi(s)f^k(s)\,\mbox{d}s=\int_0^1\frac{\ffi\bigl(f^{-1}(t)\bigr)}{f'\bigl(f^{-1}(t)\bigr)}t^k\,\mbox{d}t
$$
thus, \eqref{eq:HB} implies that $\dst\int_0^1\psi(t)t^k\,\mbox{d}t=0$ for every $k\geq0$ where $\dst
\psi(t)=\frac{\ffi\bigl(f^{-1}(t)\bigr)}{f'\bigl(f^{-1}(t)\bigr)}$. As $\{t^k\}_{k\in\N}$ in $L^2[0,1]$, 
we get $\psi=0$ thus $\ffi=0$.
\end{proof}

Applying Lemma \ref{lem:dens} to \eqref{eq:base} implies that, for every $j\geq 0$ and every $\theta\in(0,\pi/2)$
$$
\cos^j\theta\sum_{0\leq n\leq j/2}\frac{c_{n,j-n}+c_{j-n,n}}{2}\cos(2n-j)\theta=0.
$$
This implies that for every $j\geq 0$ and every $0\leq n\leq j$,
$c_{n,j-n}+c_{j-n,n}=0$. We have thus proved the following lemma:

\begin{lemma}
\label{lem:linind} Let $\alpha,\eta>0$.
Let $(c_{n,m})_{n,m\geq0}$ be a sequence with at most polynomial growth. The following are equivalent:
\begin{enumerate}
\renewcommand{\theenumi}{\roman{enumi}}
\item for every $0\leq r <\eta$,
$$
\sum_{n,m\geq0} c_{n,m}J_{n+\alpha}(r)J_{m+\alpha}(r)=0.
$$
\item for every $j\geq0$ and every $0\leq k\leq j$, $c_{k,j-k}+c_{j-k,k}=0$.
\end{enumerate}
\end{lemma}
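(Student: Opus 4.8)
The plan is to prove the two implications separately; (ii)$\Rightarrow$(i) is elementary, while (i)$\Rightarrow$(ii) is exactly the content of the computation carried out just before the statement, which I would assemble into a clean argument. For (ii)$\Rightarrow$(i) I would first organize the double sum by total degree $j=n+m$. Since $(c_{n,m})$ has at most polynomial growth and the products satisfy $J_{n+\alpha}(r)J_{m+\alpha}(r)\sim r^{n+m+2\alpha}$, the series converges locally uniformly and may be regrouped as
$$
\sum_{n,m\geq 0}c_{n,m}J_{n+\alpha}(r)J_{m+\alpha}(r)=\sum_{j\geq 0}\sum_{k=0}^j c_{k,j-k}J_{k+\alpha}(r)J_{j-k+\alpha}(r).
$$
For fixed $j$ the factor $J_{k+\alpha}(r)J_{j-k+\alpha}(r)$ is invariant under $k\mapsto j-k$, so replacing $k$ by $j-k$ and averaging rewrites the inner sum as $\frac12\sum_{k=0}^j(c_{k,j-k}+c_{j-k,k})J_{k+\alpha}(r)J_{j-k+\alpha}(r)$, which vanishes termwise under hypothesis (ii). Hence the whole sum is zero for every $r$, giving (i).

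For the harder direction (i)$\Rightarrow$(ii) I would follow the chain of identities established above the statement. Assuming (i), the holomorphy of $r^{-2\alpha}\sum_{n,m}c_{n,m}J_{n+\alpha}J_{m+\alpha}$ near $0$ (guaranteed by the pointwise Bessel bounds together with the polynomial growth of $(c_{n,m})$) lets me separate powers of $r$ and conclude $\sum_{n+m=j}c_{n,m}J_{n+\alpha}(r)J_{m+\alpha}(r)=0$ for each fixed $j$. Inserting the product-to-integral formula
$$
J_{n+\alpha}(r)J_{m+\alpha}(r)=\frac{2}{\pi}\int_0^{\pi/2}J_{n+m+2\alpha}(2r\cos\theta)\cos(n-m)\theta\,\d\theta
$$
and expanding $J_{j+2\alpha}(2r\cos\theta)$ in its power series, I then match the coefficient of each power of $r$; this produces the moment conditions \eqref{eq:base}.

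The key step is then to apply Lemma \ref{lem:dens} with $f(\theta)=\cos^2\theta$ on $[0,\pi/2]$: this $f$ is $\cc^1$, strictly decreasing and onto $[0,1]$, so $\{(\cos^2\theta)^k\}_{k\geq0}$ is dense in $L^2[0,\pi/2]$, and \eqref{eq:base} forces the continuous function $\cos^j\theta\sum_{0\leq n\leq j/2}\frac{c_{n,j-n}+c_{j-n,n}}2\cos(2n-j)\theta$ to vanish identically on $(0,\pi/2)$. Dividing by $\cos^j\theta\neq0$ and using that the maps $\theta\mapsto\cos(2n-j)\theta$, $0\leq n\leq j/2$, have pairwise distinct frequencies $|2n-j|$ and are therefore linearly independent, I obtain $c_{n,j-n}+c_{j-n,n}=0$ for $0\leq n\leq j/2$; the invariance of this expression under $n\mapsto j-n$ extends it to all $0\leq n\leq j$, which is (ii).

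I expect the main obstacle to lie in the forward direction, specifically in rigorously passing from the single vanishing series in $r$ to the term-by-term moment identities \eqref{eq:base}: this requires the holomorphy/power-series separation and the density argument to interlock, and it is exactly here that all the hypotheses are spent (polynomial growth and the explicit $J_n$ bounds for convergence and holomorphy, and strict monotonicity of $\cos^2$ for Lemma \ref{lem:dens}). The reverse direction, by contrast, reduces to the one-line symmetrization above.
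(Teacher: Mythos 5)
Your proof is correct and follows essentially the same route as the paper: the forward direction reproduces the paper's chain of arguments (holomorphy and separation by total degree, the product-to-integral formula, coefficient matching to get \eqref{eq:base}, Lemma \ref{lem:dens} with $f=\cos^2\theta$, and linear independence of the cosines), while the elementary symmetrization for (ii)$\Rightarrow$(i) is the converse the paper leaves implicit. No gaps.
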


We will apply this lemma in the form
$$
\sum_{n,m\geq0} c_{n,m}J_{n+\alpha}(r)J_{m+\alpha}(r)=\sum_{n,m\geq0}\tilde c_{n,m}J_{n+\alpha}(r)J_{m+\alpha}(r)
$$
if and only if, for every $j\geq0$ and every $0\leq k\leq j$, $c_{k,j-k}+c_{j-k,k}=\tilde c_{k,j-k}+\tilde c_{j-k,k}$.

%

\subsection{Reduction of the problem}

Let us first make the following observation: solutions of the Helmholtz equation are real analytic in $\Omega$, a connected set. Therefore, if
$u=cv$ (resp. $u=c\bar v$) in a ball $B(0,\eps)\subset\Omega$, then
$u=cv$ (resp. $u=c\bar v$) in the whole of $\Omega$. We may thus assume that $\Omega$ is a ball centered at $0$.

We will now need to describe the solutions of 
$$
\Delta u+u=0\leqno(H)
$$
in polar coordinates. As is well known and easily shown,
in a neighborhood $B(0,\eps)\subset\Omega$ of $0$, 
a solution $u$ of $(H)$ can be expanded as a series
\begin{equation}
\label{eq:helm1}
u(r\theta) \sim (2\pi)^{1/2}r^{-(d-2)/2}\sum_{m=0}^\infty\sum_{j=1}^{N(m)}
a_{m,j}(u) J_{\nu(m)}(r) Y_m^j(\theta)
\end{equation}
where 
$\nu(m) = m + (d - 2)/2$ and $\{Y^j_m\}_{j=1,\ldots,N(m)}$ is a basis for the
spherical harmonics of degree $m$ in $\R^d$.

Throughout the remaining of this section, $u,v$ will be two solutions
of $(H)$ in $\Omega$ such that $\abs{u}=\abs{v}$.

Now \eqref{eq:helm1} implies
\begin{equation}
\label{eq:phase}
|u(r\theta)|^2=\frac{2\pi}{r^{d-2}}\sum_{m,n=0}^\infty 
c_{m,n}(u)J_{m+\frac{d-2}{2}}(r)J_{n+\frac{d-2}{2}}(r)
\end{equation}
where 
$$
c_{m,n}(u)=\sum_{j=1}^{N(m)}\sum_{k=1}^{N(n)}
a_{m,j}(u)\overline{a_{n,k}(u)} Y_m^j(\theta)\overline{Y_n^k(\theta)}.
$$
Note that $c_{m,n}(u)=\overline{c_{n,m}(u)}$. Therefore, Lemma \ref{lem:linind} implies that
$|u|=|v|$ is equivalent to $\Re\bigl(c_{m,n-m}(u)\bigr)=\Re\bigl(c_{m,n-m}(v)\bigr)$
for all $0\leq m\leq n$. Finally, replacing $n$ by $n+m$ and using the symmetry $c_{n,m}(u)=\overline{c_{m,n}(u)}$,
we obtain the following:

\begin{lemma} 
\label{lem:gensol}
Let $d\geq 2$ and $0\in\Omega\subset\R^d$. Let $u,v$ be two solutions of
the Helmholtz equation $(H)$ on $\Omega$.
Then $|u|=|u|$ if and only if
for every $0\leq m\leq n$
\begin{equation}
\label{eq:gensol}
\Re\bigl(c_{m,n}(u)\bigr)=\Re\bigl(c_{m,n}(v)\bigr).
\end{equation}
\end{lemma}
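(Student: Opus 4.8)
The plan is to read the statement off directly from the Bessel expansion \eqref{eq:phase} combined with the linear-independence criterion of Lemma \ref{lem:linind}. Since $|u|$ and $|v|$ are nonnegative, the hypothesis $|u|=|v|$ is equivalent to $|u|^2=|v|^2$. By the reduction explained at the beginning of this subsection it further suffices to work in a ball $B(0,\eta)\subset\Omega$: the function $|u|^2-|v|^2$ is real analytic on the connected set $\Omega$, hence it vanishes on all of $\Omega$ as soon as it vanishes on a neighborhood of $0$.

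First I would fix $\theta\in\S^{d-1}$ and subtract the two instances of \eqref{eq:phase}, obtaining for $0\le r<\eta$
\begin{equation*}
|u(r\theta)|^2-|v(r\theta)|^2=\frac{2\pi}{r^{d-2}}\sum_{m,n\geq0}\bigl(c_{m,n}(u)-c_{m,n}(v)\bigr)J_{m+\frac{d-2}{2}}(r)J_{n+\frac{d-2}{2}}(r),
\end{equation*}
where, for each fixed $\theta$, the sequence $\bigl(c_{m,n}(u)-c_{m,n}(v)\bigr)_{m,n}$ is of at most polynomial growth (this follows from the convergence of \eqref{eq:helm1}, the Bessel estimates recalled above, and the polynomial bounds on the $Y_m^j$). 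We are thus exactly in the setting of Lemma \ref{lem:linind} with $\alpha=(d-2)/2$.

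Then I would invoke Lemma \ref{lem:linind}: the left-hand side vanishes for all $r\in[0,\eta)$ if and only if, for every $j\geq0$ and every $0\le k\le j$,
\begin{equation*}
c_{k,j-k}(u)+c_{j-k,k}(u)=c_{k,j-k}(v)+c_{j-k,k}(v).
\end{equation*}
The crucial simplification is the Hermitian symmetry $c_{n,m}(u)=\overline{c_{m,n}(u)}$ recorded before the statement, which turns each symmetrized pair into twice a real part, $c_{k,j-k}(u)+c_{j-k,k}(u)=2\Re\bigl(c_{k,j-k}(u)\bigr)$. Relabelling $(m,n)=(k,j-k)$ then gives $\Re\bigl(c_{m,n}(u)\bigr)=\Re\bigl(c_{m,n}(v)\bigr)$ for all $m,n\geq0$; since the same symmetry yields $\Re\bigl(c_{n,m}(\cdot)\bigr)=\Re\bigl(c_{m,n}(\cdot)\bigr)$, the conditions with $m>n$ are redundant, so it suffices to record them for $0\le m\le n$, which is exactly \eqref{eq:gensol}.

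The only point requiring genuine care is the pointwise-in-$\theta$ application of Lemma \ref{lem:linind}, namely verifying the polynomial-growth hypothesis so that the termwise manipulations and the grouping by fixed $j=m+n$ are legitimate; once this is granted, the remainder is pure bookkeeping of indices together with the symmetrization identity, and I expect no further obstacle.
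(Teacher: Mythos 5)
Your proposal is correct and takes essentially the same route as the paper: subtract the two expansions \eqref{eq:phase}, apply Lemma \ref{lem:linind} (in the two-sided form stated right after it) for each fixed $\theta\in\S^{d-1}$, and use the Hermitian symmetry $c_{n,m}(\cdot)=\overline{c_{m,n}(\cdot)}$ to turn the symmetrized conditions $c_{k,j-k}+c_{j-k,k}$ into real parts, reindexing to $0\leq m\leq n$; your analyticity remark correctly handles the ``if'' direction on all of $\Omega$. The one caveat is your parenthetical claim that convergence of \eqref{eq:helm1} yields polynomial growth of $c_{m,n}(u)-c_{m,n}(v)$: for a general (non-Herglotz) solution on a small ball the coefficients $a_{m,j}(u)$ may grow super-polynomially (only $a_{m,j}(u)J_{\nu(m)}(r)$ is controlled), so what one should really verify is absolute convergence and rearrangeability of the double Bessel series near $r=0$ --- but since the paper applies Lemma \ref{lem:linind} at this point with no verification at all, this is a shared subtlety rather than a deviation from the paper's argument.
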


Note that 
$c_{m,m}(u)=\dst\abs{\sum_{k=1}^{N(m)}a_{m,k}(u)Y_m^k(\theta)}^2$.
In particular, 
\begin{equation}
\label{eq:phaseretrieval}
\Re\bigl(c_{m,m}(v)\bigr)=\Re\bigl(c_{m,m}(v)\bigr)
\end{equation}
is the classical phase retrieval problem for trigonometric polynomials. 
We are thus facing a family of classical phase retrieval problems
\eqref{eq:phaseretrieval} with compatibility relations
$\Re\bigl(c_{m,n}(v)\bigr)=\Re\bigl(c_{m,n}(v)\bigr)$, $n\not=m$.

The main difference between the 2 dimensional problem
and the higher dimensional one is that the 2 dimensional one has only trivial solutions (for fixed $m$).
We will show that the compatibility relations imply that the same trivial solution has to be chosen independently of $m$.

Further, if $\{Y_m^k\}_{k=1,\ldots,N(m)}$ is an orthonormal basis of $\hh_m^d$ then, 
integrating over $\S^{d-1}$ and using orthogonality, we obtain
\begin{equation}
	\label{eq:support}
	\sum_{k=1}^{N(m)}\abs{a_{m,k}(u)}^2=\sum_{k=1}^{N(m)}\abs{a_{m,k}(v)}^2.
\end{equation}
As a consequence if $u$ is a trigonometric polynomial
in the sense that $\{a_{m,j}(u)\}$ has finite support, so has $\{a_{m,j}(v)\}$ and $v$ is also a trigonometric polynomial.
We have thus proved the following lemma:

\begin{lemma}
	\label{lem:support}
	Let $d\geq 2$ and $0\in\Omega\subset\R^d$. Let $u,v$ be two solutions of
the Helmholtz equation $(H)$ on $\Omega$.

Assume that $u$ is a trigonometric polynomial ($K$-finite) in the sense that its expansion in spherical harmonics \eqref{eq:helm1} has only finitely many terms. If $\abs{u}=\abs{v}$, then $v$ is also a trigonometric polynomial.
\end{lemma}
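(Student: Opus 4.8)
The plan is to reduce the full pointwise identity $|u|=|v|$ to the radial (diagonal) identity \eqref{eq:support} and then exploit the nonnegativity of its terms. First I would fix an \emph{orthonormal} basis $\{Y_m^k\}_{k=1,\dots,N(m)}$ of each $\hh_m^d$; this is harmless because being a trigonometric polynomial is finiteness of the expansion \eqref{eq:helm1} viewed degree by degree, a property of the finite-dimensional spaces $\hh_m^d$ rather than of a particular basis. Squaring \eqref{eq:helm1} yields \eqref{eq:phase}, and I would integrate the equality $|u(r\theta)|^2=|v(r\theta)|^2$ over $\theta\in\S^{d-1}$. Term-by-term integration is legitimate by the uniform convergence on bounded $r$-intervals recorded in the preliminaries, and orthogonality of harmonics of distinct degrees kills all off-diagonal terms $m\neq n$, while $\int_{\S^{d-1}}Y_m^j\overline{Y_m^k}=\delta_{jk}$ collapses each diagonal block. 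Writing $A_m(u)=\sum_{k=1}^{N(m)}|a_{m,k}(u)|^2$ and clearing the common factor $2\pi r^{-(d-2)}$, this leaves
\begin{equation*}
\sum_{m\geq0}A_m(u)\,J_{\nu(m)}(r)^2=\sum_{m\geq0}A_m(v)\,J_{\nu(m)}(r)^2,\qquad 0\le r<\eta.
\end{equation*}

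Next I would separate the individual Bessel coefficients. Applying Lemma \ref{lem:linind} with $\alpha=(d-2)/2$ to the diagonal sequences $c_{m,n}=\delta_{m,n}A_m(u)$ and $\tilde c_{m,n}=\delta_{m,n}A_m(v)$, the criterion $c_{k,j-k}+c_{j-k,k}=\tilde c_{k,j-k}+\tilde c_{j-k,k}$ is vacuous unless $j=2k$, in which case it reads $2A_k(u)=2A_k(v)$; alternatively one may simply note from the asymptotics $J_{\nu(m)}(r)^2\sim r^{2m+d-2}$ coming from \eqref{eq:prodbessel} that the functions $J_{\nu(m)}(r)^2$ have pairwise distinct leading orders, hence are linearly independent, so coefficients may be matched directly. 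Either route gives \eqref{eq:support}: $A_m(u)=A_m(v)$ for every $m\geq0$.

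Finally I would invoke finiteness. If $u$ is a trigonometric polynomial there is an $M$ with $a_{m,j}(u)=0$ for all $j$ and all $m>M$, so $A_m(u)=0$ for $m>M$; by \eqref{eq:support}, $A_m(v)=0$ for $m>M$. Since $A_m(v)=\sum_k|a_{m,k}(v)|^2$ is a sum of nonnegative terms, each $a_{m,k}(v)$ must vanish for $m>M$, whence the expansion \eqref{eq:helm1} of $v$ has only finitely many nonzero terms and $v$ is a trigonometric polynomial. There is no serious obstacle here: the one point deserving care is the passage from the pointwise equality $|u|=|v|$ to the equality of the two radial averages and then to the term-by-term equality of Bessel coefficients, and this is precisely what the preparatory work on \eqref{eq:prodbessel} and Lemma \ref{lem:linind} supplies.
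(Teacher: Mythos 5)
Your proof is correct and follows essentially the same route as the paper: both arguments reduce $|u|=|v|$ to the degree-by-degree identity \eqref{eq:support} and then use the nonnegativity of $\sum_k|a_{m,k}(v)|^2$ to transfer the finite support from $u$ to $v$. The only cosmetic difference is the order of operations --- the paper first separates the Bessel coefficients (Lemma \ref{lem:gensol}, via Lemma \ref{lem:linind}) and then integrates the diagonal relations over $\S^{d-1}$, whereas you integrate over the sphere first and then match Bessel coefficients --- but the ingredients (orthonormality of the spherical harmonics and the linear independence of the products $J_{\nu(m)}J_{\nu(n)}$) are identical.
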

%

\section{The case of dimension $d\geq3$}
\label{sec:d3}

\subsection{The real case}

Let us now start by showing that the problem is very simple if one restricts it to real valued solutions.
In this case, $|u|=|v|$ is equivalent to $u^2=v^2$ thus $(u-v)(u+v)=0$.
Thus, one of $u=v$ or $u=-v$ occurs on a set of positive measure and,
as $u,v$ are analytic, either $u=v$ or $u=-v$.

According to \cite{Le,Sj}, (see also \cite{JK}) in dimension $d=2$, and to \cite{GJ} for general dimension, solutions of the Helmholtz equation in 
$\R^d$ are uniquely determined by their restriction to two generic hyperplanes. It follows from \cite{FBGJ} that this result also holds for solutions on domains. Those results extend to the phase retrieval problem as follows:

\begin{proposition}
\label{prop:real}
Let $d\geq 2$ and $0\in\Omega\subset\R^d$. Let $u,v$ be two solutions of
the Helmholtz equation $(H)$ on $\Omega$.
Let $\theta_1,\theta_2,\theta_3\in\S^{d-1}$ and assume that $\frac{1}{\pi}\arccos\scal{\theta_j,\theta_k}\notin\Q$ when 
$j\not=k\in\{1,2,3\}$.
Assume that

--- $u,v$ are real valued;

--- $|u|=|v|$ on the hyperplanes $\Omega\cap\theta_j^\perp$, $j=1,2,3$ (a fortiori if $|u|=|v|$ on $\Omega$).

Then either $u=v$ or $u=-v$.
\end{proposition}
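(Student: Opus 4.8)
The statement combines two ingredients: the ``$u^2=v^2$'' trick for the phase retrieval of real solutions, and a uniqueness-from-hyperplanes result for solutions of $(H)$ cited from \cite{Le,Sj,GJ,FBGJ}. My plan is to reduce the modulus hypothesis on three hyperplanes to an equality of two genuine Helmholtz solutions, and then invoke the cited uniqueness theorem.

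First I would exploit that $u,v$ are real valued. On each hyperplane $\Omega\cap\theta_j^\perp$, the hypothesis $|u|=|v|$ is equivalent to $u^2=v^2$, i.e. $(u-v)(u+v)=0$ pointwise. The subtlety is that the zero set could switch between the factors $u-v$ and $u+v$ from point to point, so I cannot immediately conclude that one factor vanishes identically on the hyperplane. Here I would use analyticity: the restrictions of $u-v$ and $u+v$ to the connected analytic manifold $\Omega\cap\theta_j^\perp$ are themselves real analytic, their product vanishes identically, and a connected real-analytic set cannot be the union of two proper analytic subsets unless one of them is everything. Hence on each individual hyperplane either $u=v$ or $u=-v$ identically. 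Call this a choice of sign $\eps_j\in\{+1,-1\}$ attached to the hyperplane $\theta_j^\perp$.

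Next I would argue that the three signs $\eps_1,\eps_2,\eps_3$ can be taken to be equal. The hyperplanes are distinct (the angle condition $\frac1\pi\arccos\scal{\theta_j,\theta_k}\notin\Q$ forces $\theta_j\neq\pm\theta_k$), so they intersect in lower-dimensional subspaces; on the intersection $\theta_j^\perp\cap\theta_k^\perp$ both relations $u=\eps_j v$ and $u=\eps_k v$ hold. If on that intersection $v$ is not identically zero, then $\eps_j=\eps_k$ and the signs agree. The case where $v\equiv0$ on some pairwise intersection is degenerate: then $u\equiv0$ there too, and I would handle it by noting that if $v$ (hence $u$) vanishes on an intersection of codimension two, one can still read off a common sign from points where $v\neq0$, using again that the zeros of the analytic function $v$ on the hyperplane are nowhere dense unless $v\equiv0$ on the whole hyperplane; in the latter fully degenerate situation $u=v=0$ on that hyperplane and any sign is admissible, so we are free to align it with the others. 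After this bookkeeping I obtain a single sign $\eps$ with $u=\eps v$ on all three hyperplanes $\Omega\cap\theta_j^\perp$.

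Finally, $u$ and $\eps v$ are two solutions of the Helmholtz equation on $\Omega$ agreeing on the three hyperplanes $\Omega\cap\theta_j^\perp$. By the uniqueness result of \cite{Le,Sj} (for $d=2$) and \cite{GJ} (general $d$), extended to domains via \cite{FBGJ}, under the irrationality condition on the pairwise angles a solution of $(H)$ is determined by its restriction to the hyperplanes, so $u=\eps v$ on all of $\Omega$, which is the desired conclusion $u=v$ or $u=-v$. The main obstacle is the middle step: propagating a pointwise algebraic identity on each hyperplane to a single global sign, where I must be careful with the nodal sets of $u$ and $v$ and with the degenerate cases in which $u=v$ vanishes on the intersection of two hyperplanes; analyticity is what rescues each of these subcases.
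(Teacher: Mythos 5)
Your first step (on each hyperplane $(u-v)(u+v)=0$ plus analyticity forces $u=\eps_j v$ on all of $\Omega\cap\theta_j^\perp$ for some sign $\eps_j\in\{+1,-1\}$) and your final step (invoking the uniqueness theorem of \cite{Le,Sj,GJ}, extended to domains in \cite{FBGJ}) agree with the paper. The genuine gap is your middle step, the alignment of the three signs through pairwise intersections. When $v$ vanishes identically on $\theta_j^\perp\cap\theta_k^\perp$ but not on either hyperplane, both signs are forced and your argument provides no link between them: points where $v\neq0$ on $\theta_j^\perp$ determine only $\eps_j$, and points where $v\neq0$ on $\theta_k^\perp$ determine only $\eps_k$; there is no point at which both relations hold with $v\neq0$, so the phrase ``read off a common sign'' has no content. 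This degenerate case genuinely occurs and cannot be argued away. Take $d=2$, $w_1(x,y)=\sin y$, which solves $(H)$ and vanishes on the $x$-axis $\theta_1^\perp$, and let $w_2$ be a rotated copy vanishing on $\theta_2^\perp$, with $\frac{1}{\pi}\arccos\scal{\theta_1,\theta_2}\notin\Q$. Setting $u=\frac{1}{2}(w_2+w_1)$ and $v=\frac{1}{2}(w_2-w_1)$ gives two real solutions of $(H)$ with $u=v\not\equiv 0$ on $\theta_1^\perp$ and $u=-v\not\equiv 0$ on $\theta_2^\perp$ (so $|u|=|v|$ on both hyperplanes, with forced opposite signs, and $u\equiv v\equiv 0$ on the intersection), yet $u\neq\pm v$ globally. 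This shows both that opposite forced signs are possible and why the proposition requires three hyperplanes rather than two. With three hyperplanes the alignment of all three signs is indeed true, but only as a consequence of the proposition itself; it cannot be obtained beforehand by an intersection argument, so your proof is circular at exactly this point.

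The repair is much simpler and is what the paper does: since each $\eps_j$ takes one of only two values, by pigeonhole at least two of $\eps_1,\eps_2,\eps_3$ coincide, say $\eps_1=\eps_2=\eps$ (up to replacing $v$ by $-v$, one may take $\eps=+1$). Then $u$ and $\eps v$ are solutions of $(H)$ agreeing on the two hyperplanes $\Omega\cap\theta_1^\perp$ and $\Omega\cap\theta_2^\perp$, whose normals satisfy the irrational-angle hypothesis, and the two-hyperplane uniqueness theorem of \cite{FBGJ} yields $u=\eps v$ on all of $\Omega$. No comparison of signs across different hyperplanes is ever needed, and the third hyperplane is used only to make the pigeonhole argument available.
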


\begin{proof} 
As we have already noticed, we have $(u-v)(u+v)=0$ on $\Omega\cap\theta_j^\perp$.
Therefore, for each $j=1,2,3$ at least one of the following two cases 
holds

--- either $u=v$ on a subset of $\Omega\cap\theta_j^\perp$ of positive $d-1$-dimensional Lebesgue measure on the hyperplane, thus by analicity, on
$\Omega\cap\theta_j^\perp$;

--- or $u=-v$ on $\Omega\cap\theta_j^\perp$.

We thus have 2 cases to consider

--- either $u=v$ on at least 2 of $\theta_j^\perp$, $j=1,2,3$.

--- or $u=-v$ on at least 2 of $\theta_j^\perp$, $j=1,2,3$.

Up to replacing $v$ by $-v$ the second case reduces to the first one 
so that we can assume that $u=v$
on $\theta_1^\perp$, $\theta_2^\perp$. 
According to \cite{FBGJ}, this implies $u=v$ everywhere. 
\end{proof}

\subsection{Solutions of the Helmholtz equation with non vanishing mean in dimension $d\geq3$}

In this section, the basis of spherical harmonics will be the {\em real}
basis given by \eqref{eq:specspherharm}.

\begin{theorem}
Let $d\geq 3$ and $0\in\Omega\subset\R^d$ be a domain. Let $u,v$ be two solutions of
the Helmholtz equation $(H)$ on $\Omega$.
Assume that 
$$
\int_{\S^{d-1}}u(\eta\theta)\,\mbox{d}\sigma(\theta)\not=0
$$
for some $\eta>0$ such that $B(0,\eta)\subset\Omega$
and $J_{d/2-1}(\eta)\not=0$.

If $\abs{v}=\abs{u}$ then there exists 
$c\in\C$ with $\abs{c}=1$ such that, either $u=cv$ or $u=c\overline{v}$.
\end{theorem}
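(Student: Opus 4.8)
The plan is to exploit the reduction from Lemma \ref{lem:gensol}, which says that $\abs{u}=\abs{v}$ is equivalent to $\Re\bigl(c_{m,n}(u)\bigr)=\Re\bigl(c_{m,n}(v)\bigr)$ for all $0\leq m\leq n$, where the coefficients $c_{m,n}$ are built from the spherical harmonic coefficients $a_{m,\alpha}$ in the real basis \eqref{eq:specspherharm}. The key observation is that the non-vanishing mean hypothesis isolates the degree-zero term: since $\int_{\S^{d-1}}u(\eta\theta)\,\mbox{d}\sigma(\theta)\neq 0$ and $J_{d/2-1}(\eta)\neq 0$, the zeroth coefficient $a_{0}(u)$ is nonzero, and by \eqref{eq:support} so is $a_0(v)$. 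After multiplying $v$ by a unimodular constant, I would first normalize so that $a_0(u)=a_0(v)$ (both real positive, say), using the freedom in the trivial invariants; the point is that the $m=0$ harmonic is a single constant, so its phase is the global phase we are allowed to fix.

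Next I would examine the compatibility relations \eqref{eq:gensol} with $m=0$. Because $Y_0$ is constant and $a_0\neq 0$, the equation $\Re\bigl(c_{0,n}(u)\bigr)=\Re\bigl(c_{0,n}(v)\bigr)$ becomes, for each $n\geq 1$, a linear relation of the form $\Re\bigl(\overline{a_0}\sum_\alpha a_{n,\alpha}\,Y_n^\alpha(\theta)\bigr)=\Re\bigl(\overline{a_0}\sum_\alpha b_{n,\alpha}\,Y_n^\alpha(\theta)\bigr)$ holding for all $\theta\in\S^{d-1}$, where $b_{n,\alpha}=a_{n,\alpha}(v)$. With $a_0$ real, this forces $\Re\bigl(\sum_\alpha a_{n,\alpha}Y_n^\alpha\bigr)=\Re\bigl(\sum_\alpha b_{n,\alpha}Y_n^\alpha\bigr)$ as functions on the sphere. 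Since the real basis \eqref{eq:specspherharm} is real valued and the $p_\alpha$ of a fixed degree are linearly independent, this yields $\Re(a_{n,\alpha})=\Re(b_{n,\alpha})$ for every $n$ and every $\alpha$. So the real parts of all coefficients of $u$ and $v$ agree; what remains is to pin down the imaginary parts.

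To recover the imaginary parts I would feed this information back into the higher relations $\Re\bigl(c_{m,n}(u)\bigr)=\Re\bigl(c_{m,n}(v)\bigr)$ for $m\geq 1$. Writing $a_{n,\alpha}=x_{n,\alpha}+i\,y_{n,\alpha}$ and $b_{n,\alpha}=x_{n,\alpha}+i\,\tilde y_{n,\alpha}$ (same real parts, by the previous step), each such relation becomes, after subtracting, an identity among products $y_{m,\alpha}y_{n,\beta}-\tilde y_{m,\alpha}\tilde y_{n,\beta}$ paired with the products $p_\alpha p_\beta$ of basis elements. Here Lemma \ref{lem:specbaslinind}, the linear independence of the squares $p_\alpha^2$ (and, more generally, the products), is exactly the tool that converts these functional identities into scalar equations; diagonal terms $m=n$ give $y_{n,\alpha}^2=\tilde y_{n,\alpha}^2$, so $\tilde y_{n,\alpha}=\pm y_{n,\alpha}$, and the off-diagonal cross terms then force the signs to be chosen consistently. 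The main obstacle, and the crux of the argument, will be this last sign-consistency step: one must show that a single global sign $\varepsilon\in\{\pm1\}$ works simultaneously for all the imaginary parts, giving $\tilde y_{n,\alpha}=\varepsilon\,y_{n,\alpha}$, which is precisely the dichotomy $v=u$ (case $\varepsilon=+1$) versus $v=\bar u$ (case $\varepsilon=-1$) up to the unimodular constant $c$ we factored out. Care is needed at coefficients where $y_{n,\alpha}=0$, since there the sign is undetermined locally and must be propagated through the cross relations using a coefficient that does not vanish; the non-vanishing mean ensures at least the $m=0$ anchor is available to start this propagation.
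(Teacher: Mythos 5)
Your first half --- normalizing by unimodular constants so that $a_{0,1}(u)=a_{0,1}(v)$ is real and nonzero, then using the $m=0$ relations \eqref{eq:gensol} together with the realness of the basis \eqref{eq:specspherharm} and the linear independence of the degree-$n$ basis elements to conclude $\Re u=\Re v$ --- is exactly the paper's argument. The gap is in your treatment of the imaginary parts. Your key tool, the linear independence of the \emph{products} $p_\alpha p_\beta$, is not what Lemma \ref{lem:specbaslinind} provides (that lemma only treats the squares $p_\alpha^2$), and the general statement is false: the family $\set{p_\alpha p_\beta\,:\ \abs{\alpha}=m,\ \abs{\beta}=n}$ has $N(m)N(n)$ elements (of order $m^{2(d-2)}$ when $m\sim n$), all lying in $\pp_{m+n}^d$, whose dimension is only of order $(m+n)^{d-1}$; since homogeneous polynomials of a fixed degree restrict injectively to $\S^{d-1}$, for $d\geq 4$ and $m,n$ large these products \emph{cannot} be linearly independent, so the functional identities do not convert into per-coefficient scalar equations as you claim, and the argument breaks for the stated range $d\geq3$. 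A second error: after your normalization the degree-zero coefficients of $u$ and $v$ are equal and \emph{real}, so the relations $\Re c_{0,n}(u)=\Re c_{0,n}(v)$ involve only the real parts $x_{n,\alpha}$ and carry no information about the $y_{n,\alpha}$'s; the nonvanishing mean therefore cannot serve as the ``anchor'' for your sign propagation.

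Both gaps disappear if you argue with functions rather than with coefficients. Writing $P_m=\sum_\alpha y_{m,\alpha}Y_m^\alpha$ and $Q_m=\sum_\alpha\tilde y_{m,\alpha}Y_m^\alpha$, the relations \eqref{eq:gensol} for $m,n\geq1$, after cancelling the common real-part contributions, read $P_mP_n=Q_mQ_n$ on $\S^{d-1}$. Since real polynomials form an integral domain, $P_m^2=Q_m^2$ gives $Q_m=\epsilon_mP_m$ with $\epsilon_m=\pm1$, and then $P_mP_n=\epsilon_m\epsilon_nP_mP_n$ forces $\epsilon_m\epsilon_n=1$ whenever $P_m,P_n\neq0$: a single global sign exists, and no anchor is needed (degrees with $P_m=0$ are irrelevant since then $Q_m=0$ too). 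The paper reaches the same conclusion even faster and avoids all coefficient bookkeeping: from $\abs{u}=\abs{v}$ and $\Re u=\Re v$ one gets $\abs{\Im u}=\abs{\Im v}$ pointwise, and since $\Im u$ and $\Im v$ are \emph{real} solutions of the Helmholtz equation, the factorization $(\Im u-\Im v)(\Im u+\Im v)=0$ together with analyticity (Proposition \ref{prop:real}) gives $\Im v=\pm\Im u$ on all of $\Omega$, that is $v=u$ or $v=\bar u$ after the normalization, which is the desired conclusion.
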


\begin{proof}
As $Y_0^1=1$ and all other spherical harmonics have vanishing spherical means, 
$$
a_{0,1}(u)=\dst\frac{\eta^{d/2-1}}{\sqrt{2\pi}\sigma(\S^{d-1})J_{d/2-1}(\eta)}\int_{\S^{d-1}}u(\eta\theta)\,\mbox{d}\sigma(\theta)\not=0
$$ is, up to a multiplicative constant, 
the mean of $u$ over $\eta\S^{d-1}$. 
But, \eqref{eq:gensol} for
$m=n=0$ reduces to $\abs{a_{0,1}(v)}^2=\abs{a_{0,1}(u)}^2$. In particular,
$v$ as non-zero mean as well. Further, up to changing $u$ and $v$ by
uni-modular multiples, we may then assume that
$a_{0,1}(u)=a_{0,1}(v)$ and that this quantity is \emph{real}.

Next, note that
$c_{0,n}(u)=\dst \sum_{k=1}^{N(n)}
a_{0,1}(u)\overline{a_{n,k}(f)}\overline{Y_n^k(\theta)}$
since $Y_0^1(\theta)=1$. Thus
$$
\Re\bigl(c_{0,n}(u)\bigr)=\dst a_{0,1}(u)\sum_{k=1}^{N(n)}
\Re\bigl(a_{n,k}(u)\bigr)
Y_n^k(\theta)
$$
As the $Y_n^k$'s are linearly independent, \eqref{eq:gensol} for
$m=0$ implies that 
$$
\Re\bigl(a_{n,k}(v)\bigr)=\Re\bigl(a_{n,k}(u)\bigr)
$$
thus $\Re(v)=\Re(u)$.

As $\abs{v}^2=\abs{u}^2$ we deduce that
$\abs{\Im(v)}^2=\abs{\Im(u)}^2$. But 
$\Im u,\Im v$ are {\em real} solutions of the Helmholtz equation.
Proposition \ref{prop:real} then implies that 
$$
\mbox{either }\Im v=\Im u
\mbox{ or }\Im v=-\Im u,
$$
that is $v=u$ or $v=\bar u$.
\end{proof}

\subsection{The sparse case and the zonal cases in dimension $d\geq 3$}

In this section, $d\geq 3$ and 
we fix an orthonormal basis of spherical harmonics 
$$
\{Y_m^j\}_{m\geq 0,j=1,\ldots,N(m)}
$$
that is
either zonal or is of the form \eqref{eq:specspherharm}.
In particular, it is {\em real} and has the property that if
$a(Y_m^j)^2=b(Y_n^k)^2$ on $\S^{d-1}$, then $m=n$, $j=k$ and $b=a$.
For the zonal basis this is due to Lemma \ref{lem:zonlinind} and is a consequence of Lemma \ref{lem:specbaslinind} for the other basis.

\begin{definition}
We will say that $u\in L^2(\S^{d-1})$ is {\em sparse} (in the basis $\{Y_j^m\}$) if, for every $m$ there exists at most one $j=j(m)$ such that $a_{m,j}(u)\not=0$.
\end{definition}

\begin{example}
A zonal function is sparse in a zonal basis.
\end{example}

\begin{proposition}
Let $d\geq 3$ and $0\in\Omega\subset\R^d$ be a domain. Let $u,v$ be two solutions of
the Helmholtz equation $(H)$ on $\Omega$.

Assume that both $u$ and $v$ are sparse
	in a common \emph{real} orthonormal basis of spherical harmonics $\{Y_m^j,m\geq 0, 0\leq j\leq N(m)\}$. If $\abs{v}=\abs{u}$
	then there exists a $c\in\C$ with $\abs{c}=1$ such that either $v=cu$
	or $v=c\overline{u}$.
\end{proposition}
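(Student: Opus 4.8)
The plan is to use the sparsity hypothesis to reduce the phase retrieval problem to a purely scalar problem on the sequence of nonzero coefficients. Since $u$ is sparse, for each $m$ there is at most one index $j=j(m)$ with $a_{m,j(m)}(u)\neq 0$; write $\alpha_m=a_{m,j(m)}(u)$ and similarly $\beta_m=a_{m,k(m)}(v)$ for the single nonzero $v$-coefficient at level $m$ (if it exists). The first step is to extract information from the diagonal relations. For fixed $m$, equation \eqref{eq:gensol} with $m=n$ reads $\Re\bigl(c_{m,m}(u)\bigr)=\Re\bigl(c_{m,m}(v)\bigr)$, and by sparsity $c_{m,m}(u)=\abs{\alpha_m}^2\,Y_m^{j(m)}(\theta)^2$ while $c_{m,m}(v)=\abs{\beta_m}^2\,Y_m^{k(m)}(\theta)^2$. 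Both sides are already real, so the chosen special property of the basis (namely that $a(Y_m^j)^2=b(Y_n^k)^2$ forces $m=n$, $j=k$, $a=b$) forces $j(m)=k(m)$ and $\abs{\alpha_m}=\abs{\beta_m}$. Thus $u$ and $v$ share the same ``active'' index at every level, and their coefficients agree in modulus.

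Having matched the indices, I would next exploit the off-diagonal relations to pin down the relative phases. Let $j(m)$ be the common active index at level $m$. For $m<n$ with both $\alpha_m,\alpha_n$ nonzero, sparsity gives
$$
c_{m,n}(u)=\alpha_m\overline{\alpha_n}\,Y_m^{j(m)}(\theta)\overline{Y_n^{j(n)}(\theta)},
$$
and since the $Y$'s are real this is $\alpha_m\overline{\alpha_n}$ times the real function $Y_m^{j(m)}Y_n^{j(n)}$. The analogous expression holds for $v$ with $\beta_m\overline{\beta_n}$. Because this real spherical-harmonic product is not identically zero (the $Y_m^{j(m)}$ are a subset of a basis, hence linearly independent and nonvanishing as a product), \eqref{eq:gensol} forces $\Re(\alpha_m\overline{\alpha_n})=\Re(\beta_m\overline{\beta_n})$ for all such pairs. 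Writing $\alpha_m=\abs{\alpha_m}e^{i\phi_m}$ and $\beta_m=\abs{\alpha_m}e^{i\psi_m}$ (using $\abs{\beta_m}=\abs{\alpha_m}$ from the diagonal step), and restricting to the indices $m$ with $\alpha_m\neq 0$, the constraint becomes $\cos(\phi_m-\phi_n)=\cos(\psi_m-\psi_n)$, i.e. $\psi_m-\psi_n=\pm(\phi_m-\phi_n)$ for every pair.

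The main obstacle, and the crux of the argument, is to upgrade these pairwise sign ambiguities into a single global choice of sign. The relations $\psi_m-\psi_n=\pm(\phi_m-\phi_n)$ determine the phase differences only up to a sign that could a priori depend on the pair $(m,n)$; one must rule out a ``mixed'' situation where some differences are preserved and others flipped. I would argue this by a connectedness/consistency argument over the index set $S=\{m:\alpha_m\neq 0\}$. Fixing a base index $m_0\in S$ and normalizing by a unimodular constant so that $\psi_{m_0}=\phi_{m_0}$, one checks on each triple $m_0,m,n$ that the three pairwise relations cannot hold simultaneously with inconsistent signs unless one of the differences is forced; whenever a genuine triple of distinct nonzero phase differences is present, the $+$ choice for one pair propagates to all pairs, whereas the $-$ choice does likewise, yielding globally $\psi_m-\psi_{m_0}=\epsilon(\phi_m-\phi_{m_0})$ with a single $\epsilon\in\{+1,-1\}$ independent of $m$. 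In the degenerate case where all nonzero phase differences collapse, either sign works and the conclusion is immediate.

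Once the global sign $\epsilon$ is fixed, the proof concludes quickly. If $\epsilon=+1$ then, writing $c=e^{i(\psi_{m_0}-\phi_{m_0})}$, we get $\beta_m=c\,\alpha_m$ for every active $m$ (and trivially for the inactive ones), so that the spherical-harmonic expansions \eqref{eq:helm1} of $v$ and $cu$ coincide term by term, whence $v=cu$ on $B(0,\eps)$ and, by analyticity and connectedness of $\Omega$, on all of $\Omega$. If instead $\epsilon=-1$ then $\psi_m=-\phi_m+\text{const}$, giving $\beta_m=c\,\overline{\alpha_m}$; comparing with the expansion of $\overline{u}$ (whose coefficients are $\overline{\alpha_m}$, the bases being real) yields $v=c\overline{u}$, again propagated to $\Omega$ by analyticity. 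In both cases $\abs{c}=1$, which is exactly the trivial-solution conclusion, so the proposition follows.
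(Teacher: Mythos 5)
Your proof is correct, and after the shared first step it takes a genuinely different route from the paper's. Both arguments open identically: the diagonal relations $\Re\bigl(c_{m,m}(u)\bigr)=\Re\bigl(c_{m,m}(v)\bigr)$, combined with the property that $a(Y_m^j)^2=b(Y_m^k)^2$ forces $j=k$ and $a=b$, give matching active indices $j(m)=k(m)$ and matching moduli $\abs{a_{m,j(m)}(u)}=\abs{a_{m,j(m)}(v)}$. From there the paper takes a shortcut: it normalizes $u,v$ by unimodular constants so that the lowest active coefficient $a_{m_0,j(m_0)}$ is \emph{real} and common to both, then uses only the compatibility relations involving $m_0$ to get $\Re\bigl(a_{m,j(m)}(v)\bigr)=\Re\bigl(a_{m,j(m)}(u)\bigr)$ for every $m$, hence $\Re v=\Re u$; the remaining sign ambiguity is resolved at the function level, since $\abs{\Im u}=\abs{\Im v}$ and $\Im u,\Im v$ are real solutions of $(H)$, so the factorization $(\Im u-\Im v)(\Im u+\Im v)=0$ and analyticity (Proposition \ref{prop:real}, the real-valued case) give $\Im v=\pm\Im u$, i.e. $v=u$ or $v=\bar u$. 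You instead exploit \emph{all} off-diagonal pairs, obtaining $\cos(\phi_m-\phi_n)=\cos(\psi_m-\psi_n)$ on the active set, and resolve the ambiguity combinatorially: if the signs relative to $m_0$ were mixed on a pair $m,n$, the relation for that pair forces $2(\phi_m-\phi_{m_0})\equiv 0$ or $2(\phi_n-\phi_{m_0})\equiv 0\pmod{2\pi}$, and such degenerate indices are sign-ambivalent, so a single global $\epsilon\in\{\pm1\}$ exists; this is the classical phase-propagation argument of discrete phase retrieval, and your sketch of the triple analysis, though the one place that should be written out in full, does check out. The trade-off: the paper's route is shorter and reuses Proposition \ref{prop:real} (the same ending as the non-vanishing-mean theorem), at the price of one more appeal to analyticity of solutions; yours stays entirely at the coefficient level, is self-contained (no need for the real-valued proposition), produces the constant $c$ and the $u$-versus-$\bar u$ dichotomy explicitly, and makes visible exactly where non-trivial solutions would have to hide, namely the degenerate indices where both sign choices coincide.
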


\begin{proof}
Let $u,v$ be two sparse functions:
$$
u(r\theta)=\frac{\sqrt{2\pi}}{r^{d/2-1}}\sum_{m\geq0}a_{m,j(m)}(u)J_{\nu(m)}(r)Y_m^{j(m)}(\theta)
$$
and
$$
v(r\theta)=\frac{\sqrt{2\pi}}{r^{d/2-1}}\sum_{m\geq0}a_{m,k(m)}(v)J_{\nu(m)}(r)Y_m^{k(m)}(\theta)
$$
and assume that $\abs{u}=\abs{v}$. Note that
some of the $a_{m,j(m)}(u)$'s may still be zero. For simplicity, we set $j(m)=1$
when $a_{m,j}(u)=0$ for all $j$ and $k(m)=1$ when $a_{m,k}(v)=0$ for all $k$.

First, \eqref{eq:gensol} for $n=m$ implies that
$$
\abs{a_{m,j(m)}(u)}^2\bigl(Y_m^{j(m)}\bigr)^2
=\abs{a_{m,k(m)}(v)}^2\bigl(Y_m^{k(m)}\bigr)^2.
$$
This implies that $k(m)=j(m)$, {\it i.e} that
$a_{m,k}(u)$ and $a_{m,k}(v)$ have same support,
and that $\abs{a_{m,j(m)}(u)}=\abs{a_{m,j(m)}(v)}$.

Let $\mm=\{m\geq0\,:\ a_{m,j(m)}(u)\not=0\}$ and $m_0=\min\mm$. 
Then, up to replacing $u,v$ by unimodular multiples,
with may assume that $a_{m_0,j(m_0)}(u)=a_{m_0,j(m_0)}(v)$ is \emph{real}
(and non zero).

Next, note that, for $m\in\mm$,
$$
c_{m,n}(u)=
a_{m,j(m)}(u)\overline{a_{n,j(n)}(u)} Y_m^{j(m)}(\theta)
Y_n^{j(n)}(\theta)
$$
and similarily
$$
c_{m,n}(v)=
a_{m,j(m)}(v)\overline{a_{n,j(n)}(v)} Y_m^{j(m)}(\theta)
Y_n^{j(n)}(\theta)
$$
Further, $Y_m^{j(m)}(\theta)Y_n^{j(n)}(\theta)$ is a non-zero real polynomial. Thus \eqref{eq:gensol} reduces to
$$
\Re\bigl(a_{m,j(m)}(u)\overline{a_{n-m,j(n-m)}(u)}\bigr)
=\Re\bigl(a_{m,j(m)}(v)\overline{a_{n-m,j(n-m)}(v)}\bigr)
$$
for every $0\leq m\leq n$. Taking $m=m_0$ and $n=2m_0+k$ we get
$$
\Re\bigl(a_{m_0+k,j(m_0+k)}(u)\bigr)=
\Re\bigl(a_{m_0+k,j(m_0+k)}(v)\bigr).
$$

It follows that $\Re(u)=\Re(v)$ and, as in the previous proof, this implies that either $u=v$
or $u=\bar v$.
\end{proof}

\section{The 2 dimensional case}
\label{sec:d2}

In this section, we will specifically treat the 2 dimensional case for which uniqueness is guarantied:

\begin{theorem}
Let $0\in\Omega\subset\R^2$ be a domain. Let $u,v$ be two solutions of
the Helmholtz equation $(H)$ on $\Omega$ be such that $|v|=|u|$. Then there exists $c\in\C$ with $|c|=1$ such that
either $v=cu$ on $\Omega$ or $v=c\bar u$ on $\Omega$.
\end{theorem}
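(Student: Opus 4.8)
The plan is to work in the complex Fourier--Bessel expansion adapted to $d=2$. Since $d-2=0$ and $\nu(m)=m$, every solution of $(H)$ on a ball about $0$ can be written, after passing to the complex exponential basis $\set{e^{in\phi}}_{n\in\Z}$ of the circle, as
\[
u(re^{i\phi})=\sum_{n\in\Z}a_nJ_n(r)e^{in\phi},\qquad v(re^{i\phi})=\sum_{n\in\Z}b_nJ_n(r)e^{in\phi},
\]
where $J_{-n}=(-1)^nJ_n$. First I would record the two families of trivial solutions in coordinates: $v=cu$ corresponds to $b_n=ca_n$, while $v=c\ovl u$ corresponds to $b_n=c(-1)^n\ovl{a_{-n}}$ (the latter because $\ovl u$ has coefficients $(-1)^n\ovl{a_{-n}}$). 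Expanding $\abs u^2=\sum_{p\in\Z}A_p(r)e^{ip\phi}$ with $A_p(r)=\sum_m a_{m+p}\ovl{a_m}J_{m+p}(r)J_m(r)$, the hypothesis $\abs u=\abs v$ becomes $A_p(u)=A_p(v)$ for every $p\ge0$. Rewriting each $A_p$ in the products $J_iJ_j$, $0\le i\le j$, and invoking Lemma \ref{lem:linind} turns $\abs u=\abs v$ into an explicit countable system of quadratic relations among the $a_n$ and the $b_n$.

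Next I would anchor the argument at the lowest frequency. Let $m_0\ge0$ be the smallest integer with $(a_{m_0},a_{-m_0})\ne(0,0)$. The coefficient of the lowest surviving power $r^{2m_0}$ in $A_0$ gives $\abs{a_{m_0}}^2+\abs{a_{-m_0}}^2=\abs{b_{m_0}}^2+\abs{b_{-m_0}}^2$, and I would check that minimality of $m_0$ makes the leading coefficient of $A_{2m_0}$ isolate the single product $J_{m_0}^2$, yielding $a_{m_0}\ovl{a_{-m_0}}=b_{m_0}\ovl{b_{-m_0}}$. These two identities are exactly the data of the classical phase-retrieval problem for the one-frequency polynomial $a_{m_0}e^{im_0\phi}+a_{-m_0}e^{-im_0\phi}$; solving it shows $(b_{m_0},b_{-m_0})$ is obtained from $(a_{m_0},a_{-m_0})$ either by a global unimodular factor or by the conjugation pattern. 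After multiplying $v$ by a constant phase and possibly replacing $v$ by $\ovl v$ (both trivial operations), I may thus assume $b_{\pm m_0}=a_{\pm m_0}$.

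I would then propagate to the higher blocks by induction on $\abs k$. Fix $M>m_0$ and assume $b_k=a_k$ for $\abs k<M$. Matching the coefficient of $J_MJ_{m_0}$ in $A_{M-m_0}$ and in $A_{M+m_0}$ gives two linear equations for $x=b_M-a_M$ and $y=\ovl{b_{-M}}-\ovl{a_{-M}}$, and I would compute the determinant of this $2\times2$ system to be $\pm(\abs{a_{m_0}}^2-\abs{a_{-m_0}}^2)$. Hence, whenever $\abs{a_{m_0}}\ne\abs{a_{-m_0}}$, we get $x=y=0$, so $b_M=a_M$ and $b_{-M}=a_{-M}$; the induction closes and $v=u$ (equivalently $v=c\ovl u$ before the normalization). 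The case $m_0=0$ (non-vanishing mean) is even simpler: after rotating so that $a_0=b_0$ is real, the coefficient of $J_MJ_0$ yields $a_M+(-1)^M\ovl{a_{-M}}=b_M+(-1)^M\ovl{b_{-M}}$ for every $M$, which is precisely $\Re u=\Re v$; since $\Im u,\Im v$ are then real solutions of $(H)$ with $\abs{\Im u}=\abs{\Im v}$, Proposition \ref{prop:real} gives $\Im v=\pm\Im u$ and finishes the proof.

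The hard part is the degenerate anchor $m_0\ge1$ with $\abs{a_{m_0}}=\abs{a_{-m_0}}$. There the determinant above vanishes, so the cross-relations collapse to a single linear relation between $b_M-a_M$ and $\ovl{b_{-M}-a_{-M}}$, and propagation does not close on its own; moreover a parity factor $(-1)^{m_0}$ obstructs reducing this lone relation directly to $\Re u=\Re v$. To overcome this I would supplement the degenerate cross-relation with block $M$'s own one-frequency phase-retrieval data — the coefficient of $J_M^2$ in $A_0$ together with the leading coefficient of $A_{2M}$ — which already confine $(b_M,b_{-M})$ to the trivial orbit of $(a_M,a_{-M})$; intersecting that constrained set with the linear cross-relation should force the single coherent global choice (all type $v=cu$, or all type $v=c\ovl u$). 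I expect this coherence argument through the equal-modulus lowest block (which, up to a global phase, is essentially real) to be the main technical obstacle, the remaining steps being leading-order Bessel bookkeeping justified throughout by Lemma \ref{lem:linind}.
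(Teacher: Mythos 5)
Your reduction, the anchoring at the lowest block $m_0$, and the non-degenerate propagation are sound: after the one-block normalization (which is the paper's Lemma \ref{lem:phase2d}), the cross-relations $\Re c_{m_0,M}(u)=\Re c_{m_0,M}(v)$ do give a $2\times 2$ system in $x=b_M-a_M$, $y=\ovl{b_{-M}}-\ovl{a_{-M}}$ with determinant $\pm\bigl(\abs{a_{m_0}}^2-\abs{a_{-m_0}}^2\bigr)$, and your $m_0=0$ case coincides with the paper's non-vanishing-mean argument ($\Re u=\Re v$, then Proposition \ref{prop:real} applied to $\Im u,\Im v$). But the degenerate case $m_0\geq1$, $\abs{a_{m_0}}=\abs{a_{-m_0}}$, which you leave as an expectation, is a genuine gap, and the specific mechanism you propose for it cannot close it. In that case all the constraints involving block $m_0$ collapse to the single relation $\ovl{\alpha}\,x+\alpha\,y=0$, where $a_{-m_0}=e^{-2i\ffi}a_{m_0}$ and $\alpha=e^{-i\ffi}a_{m_0}$, and this relation does \emph{not} separate the two orbits of block $M$: the identity option $b_{\pm M}=a_{\pm M}$ satisfies it trivially, but so does the conjugate option $b_M=\kappa\ovl{a_{-M}}$, $b_{-M}=\kappa\ovl{a_M}$ with $\kappa=\alpha/\ovl{\alpha}=a_{-m_0}/\ovl{a_{m_0}}$, since then
\begin{equation*}
\ovl{\alpha}\bigl(\kappa\ovl{a_{-M}}-a_M\bigr)+\alpha\bigl(\ovl{\kappa}a_M-\ovl{a_{-M}}\bigr)
=\alpha\ovl{a_{-M}}+\ovl{\alpha}a_M-\bigl(\ovl{\alpha}a_M+\alpha\ovl{a_{-M}}\bigr)=0.
\end{equation*}
(These two options are exactly the restrictions to block $M$ of the two global trivial solutions $v=u$ and $v=(a_{-m_0}/\ovl{a_{m_0}})\ovl{u}$, both of which fix the essentially real block $m_0$; naturally neither can be excluded by data involving $m_0$ alone.) So after intersecting block $M$'s own phase-retrieval orbit with the cross-relation to $m_0$, every higher block still carries a binary ambiguity, and nothing so far prevents one block from choosing the identity option while another chooses the conjugate option, which would produce a non-trivial $v$.

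What is needed to finish is precisely what the paper supplies and your sketch omits: compatibility relations between \emph{pairs of higher blocks}, not only with $m_0$. The paper classifies each block as type I, C or R (Lemma \ref{lem:phase2d}), proves that two blocks of the same type, not both of type R, must share the same unimodular constant (Lemma \ref{lem:phase2dmixt}), that a type-I-not-R block and a type-C-not-R block cannot coexist (Lemma \ref{lem:mixt}), and then treats the remaining all-type-R situation \eqref{eq:pure} by a final collapsing computation: even when different blocks make genuinely different admissible choices $\kappa_m$, the relation forced between a block with $\kappa_n=\kappa_{m_0}$ and a disagreeing block shows that $\kappa_n\beta_n=\tilde\kappa_{m_0}\ovl{\beta_n}$ holds for \emph{all} blocks, so the apparent mixture reorganizes into the single conjugate solution $v=\tilde\kappa_{m_0}\ovl{u}$. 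In other words, the correct resolution of your degenerate case is not that the constraints force ``one coherent choice per block'' (they do not), but that inconsistent-looking choices always recombine into a trivial solution; this collapsing argument is the heart of the two-dimensional theorem and is absent from your proposal.
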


\begin {proof}
A basis of spherical harmonics is given by the usual Fourier basis
$\{e^{ik\theta},k\in\Z\}$ so that we  may write $u$ in polar coordinates as a Fourier series
$$
u(r\theta)=\sqrt{2\pi}\sum_{k\in\Z}\widehat{u}(k)J_{\abs{k}}(r)e^{ik\theta}.
$$
Note that $c_{m,n}(u)=$ 
$$
\begin{cases}
|\hat u(0)|^2&\mbox{if }m=n=0\\
\hat u(0)\overline{\bigl(\hat u(-n)e^{-in\theta}+\hat u(n)e^{in\theta}\bigr)}&\mbox{if }m=0,n>0\\
\bigl(\hat u(-m)e^{-im\theta}+\hat u(m)e^{im\theta}\bigr)\overline{\bigl(\hat u(-n)e^{-in\theta}+\hat u(n)e^{in\theta}\bigr)}
&\mbox{if }m,n>0\end{cases}.
$$
We will now exploit Lemma \ref{lem:gensol}, that is $\Re c_{m,n}(u)=\Re c_{m,n}(v)$ for every $0\leq m\leq n$.

We have already treated the case $\widehat{u}(0)\not=0$ so that we will now assume that $\widehat{u}(0)=0$.
This implies that $c_{0,0}(v)=c_{0,0}(u)=0$, thus $\widehat{v}(0)=0$.

Next $c_{m,m}(u)=\abs{\hat u(-m)e^{-im\theta}+\hat u(m)e^{im\theta}}^2$ is real so that
\begin{equation}
\label{eq:phase2d}
\abs{\hat v(-m)e^{-im\theta}+\hat v(m)e^{im\theta}}^2=\abs{\hat u(-m)e^{-im\theta}+\hat u(m)e^{im\theta}}^2
\end{equation}
for every $m\geq 1$. We thus need the following lemma:

\begin{lemma}\label{lem:phase2d}
Let $m,n\in\N$. Let $a,b,c,d\in\C$ be such that, for every $\theta\in\R$,
\begin{equation}
\label{eq:phase2dlem}
\abs{ae^{im\theta}+be^{-im\theta}}^2=\abs{ce^{im\theta}+de^{-im\theta}}^2
\end{equation}
then there exists $\kappa\in\C$ such that either $ae^{im\theta}+be^{-im\theta}=\kappa\bigl(ce^{im\theta}+de^{-im\theta}\bigr)$
for every $\theta$ or $ae^{im\theta}+be^{-im\theta}=\kappa\overline{\bigl(ce^{im\theta}+de^{-im\theta}\bigr)}$
for every $\theta$.
\end{lemma}

\begin{proof}[Proof of Lemma \ref{lem:phase2d}] This lemma is folklore in the subject, for sake of completeness, let us here give the proof.
Expanding the square in \eqref{eq:phase2dlem} we see that
$|a|^2+|b|^2=|c|^2+|d|^2$ and $a\bar b=c\bar d$. In particular $|a||b|=|c||d|$, so that $|a|^2,|b|^2$ have same sum and product as
$|c|^2,|d|^2$. There are thus two possibilities, either $|a|=|c|$ and $|b|=|d|$ or $|a|=|d|$ and $|b|=|c|$.
We may thus write $a=ce^{i\ffi}$ and $b=de^{i\psi}$ (resp. $a=\bar de^{i\ffi}$ and $b=\bar ce^{i\psi}$)
so that $c\bar de^{i(\ffi-\psi)}=c\bar d$ (resp. $c\bar de^{i(\ffi-\psi)}=c\bar d$).
We now distinguish 3 cases:
\begin{enumerate}
\renewcommand{\theenumi}{\roman{enumi}}
\item If $c\bar d\not=0$ then $e^{i\ffi}=e^{i\psi}$ and the conclusion is straightforward.

\item If $c=0$ then $a=0$ --- resp. $b=0$--- and 
$$
ae^{im\theta}+be^{-im\theta}=e^{i\psi}de^{-im\theta}=e^{i\psi}(ce^{im\theta}+de^{-im\theta})
$$
--- resp. $ae^{im\theta}+be^{-im\theta}=e^{i\ffi}\bar de^{im\theta}=e^{i\ffi}\overline{(ce^{im\theta}+de^{-im\theta})}$.

\item If $d=0$ then $b=0$ --- resp. $a=0$--- and $ae^{im\theta}+be^{-im\theta}=e^{i\ffi}(ce^{im\theta}+de^{-im\theta})$
--- resp. $ae^{im\theta}+be^{-im\theta}=e^{i\psi}\overline{(ce^{im\theta}+de^{-im\theta})}$.
\end{enumerate}
The proof of the lemma is thus complete.
\end{proof}

Applying this lemma, we may now distinguish two cases:

\smallskip

\noindent{\bf Type I.} We will say that $m$ is of {\em type I} if $\bigl(\hat{u}(m),\hat{u}(-m)\bigr)\not=(0,0)$ and if
there exists $\kappa_m\in\C$ with $|\kappa_m|=1$ such that
$$
\hat v(-m)e^{-im\theta}+\hat v(m)e^{im\theta}=\kappa_m\bigl(\hat u(-m)e^{-im\theta}+\hat u(m)e^{im\theta}\bigr).
$$

\smallskip

\noindent{\bf Type C.} We will say that $m$ is of {\em type C} if $\bigl(\hat{u}(m),\hat{u}(-m)\bigr)\not=(0,0)$ and if
there exists $\kappa_m\in\C$ with $|\kappa_m|=1$ such that
$$
\hat v(-m)e^{-im\theta}+\hat v(m)e^{im\theta}=\kappa_m\overline{\bigl(\hat u(-m)}e^{-im\theta}+\hat u(m)e^{im\theta}\bigr).
$$

\smallskip

It should be noted that $m$ can be of both types simultaneously. This happens precisely when $\abs{\hat u(-m)}=\abs{\hat u(m)}$,
in this case, we may write $\hat u(-m)=e^{-2i\theta_m}\hat u(m)$ so that 
$$
\hat u(-m)e^{-im\theta}+\hat u(m)e^{im\theta}=e^{-i\theta_m}\hat u(m)\cos (m\theta+\theta_m).
$$

\smallskip

\noindent{\bf Type R.} We will say that $m$ is of {\em type R} if $\abs{\hat{u}(m)}=\abs{\hat{u}(-m)}\not=0$ and if
there exists $\kappa_m\in\C$ with $|\kappa_m|=1$, $\theta_m\in\R$ such that
$$
\left\{\begin{matrix}
\hat u(-m)e^{-im\theta}+\hat u(m)e^{im\theta}&=&e^{-i\theta_m}\hat u(m)\cos (m\theta+\theta_m)\\
\hat v(-m)e^{-im\theta}+\hat v(m)e^{im\theta}&=&\kappa_me^{-i\theta_m}\hat u(m)\cos (m\theta+\theta_m)
\end{matrix}\right.
$$

\smallskip

 Also, $\bigl(\hat{g}(m),\hat{g}(-m)\bigr)\not=(0,0)$
if $m$ has a type.

We will now need the following lemma:

\begin{lemma}
\label{lem:phase2dmixt}
Let $m\not=n\in\N$.
Let $a,b,c,d\in\C$ be such that $(a,b),(c,d)\not=(0,0)$. 
Let $\kappa,\kappa'\in\C$ with $|\kappa|=|\kappa'|=1$.
Assume that
\begin{multline*}
\Re\bigl(\kappa'\bar\kappa(ae^{im\theta}+be^{-im\theta})(ce^{in\theta}+de^{-in\theta})\bigr)\\
=\Re\bigl((ae^{im\theta}+be^{-im\theta})(ce^{in\theta}+de^{-in\theta})\bigr)
\end{multline*}
for every $\theta\in\R$. 
\begin{enumerate}
\renewcommand{\theenumi}{\roman{enumi}}
\item If $|a|\not=|b|$ or $|c|\not=|d|$, then $\kappa'=\kappa$.

\item If $|a|=|b|$ and $|c|=|d|$, write $b=e^{-2i\ffi}a$, $d=e^{-2i\psi}c$ and
$\alpha=e^{-i\ffi}a$, $\beta=e^{-i\psi}c$.
Then either $\kappa'=\kappa$ or $\dst\kappa'= \frac{\overline{\alpha\beta}}{\alpha\beta}\kappa$.
\end{enumerate}
\end{lemma}

\begin{proof}[Proof of Lemma \ref{lem:phase2dmixt}]
Let us first observe that, if $\Re(uz)=\Re(z)$ with $u,z\in\C$, $|u|=1$ and $z\not=0$ then either $uz=z$ and $u=1$ or $uz=\bar z$.

We thus have to prove that the second case only occurs when $|a|=|b|$ and $|c|=|d|$ and that $\kappa,\kappa'$
are then related by $\dst\kappa'= \frac{\overline{\alpha\beta}}{\alpha\beta}\kappa$.
\begin{multline}
\label{eq:conj}
\kappa'\bar\kappa(ae^{im\theta}+be^{-im\theta})(ce^{in\theta}+de^{-in\theta})\\
=\overline{(ae^{im\theta}+be^{-im\theta})(ce^{in\theta}+de^{-in\theta})}
\end{multline}
for a set of positive measure of $\theta$'s thus, by analycity, for all $\theta$.

Expanding and comparing the coefficients of $e^{\pm im\theta\pm in\theta}$ (recall that $m\not=n$) this is equivalent to
$$
(i)\ \kappa'\bar\kappa ac=\overline{bd},\quad (ii)\ \kappa'\bar\kappa ad=\overline{bc}
,\quad (iii)\ \kappa'\bar\kappa bc=\overline{ad}\quad\mbox{and}\quad (iv)\ \kappa'\bar\kappa bd=\overline{ac}.
$$

First, if $a=0$ (resp. $b=0$) then, as $b\not=0$ (resp $a\not=0$), the two first equations imply $d=c=0$, a contradiction.
Thus $a\not=0$ and $b\not=0$ and any of the equations then shows that $c\not=0$ and $d\not=0$ since otherwise they would both be $0$.

Next,  $(i)/(ii)$ reads $\dst\frac{c}{d}=\frac{\bar d}{\bar c}$ thus $|c|^2=|d|^2$ and comparing modulus in $(i)$ then shows that
$|a|=|b|$. We then write $b=e^{-2i\ffi}a$, $d=e^{-2i\psi}c$. But then, \eqref{eq:conj} reads
\begin{multline*}
\kappa'\bar\kappa e^{-i(\ffi+\psi)} ac(e^{i(m\theta+\ffi)}+e^{-i(m\theta+\ffi)})(e^{i(n\theta+\psi)}+e^{-i(n\theta+\psi)})\\
=\overline{ace^{-i(\ffi+\psi)}(e^{i(m\theta+\ffi)}+e^{-i(m\theta+\ffi)})(e^{i(n\theta+\psi)}+e^{-i(n\theta+\psi)})}
\end{multline*}
which reduces to $\dst\kappa'= \frac{\overline{\alpha\beta}}{\alpha\beta}\kappa$.
\end{proof}

As a consequence of the lemma we get that, if $m,n$ are of type I (resp. of type C) and one of them is not of type $R$, then 
$\kappa_m=\kappa_n$. We thus obtain the following:

--- either all $m$ are of type R and then
\begin{equation}
\label{eq:pure}
\left\{\begin{matrix}
u(r\theta)&=&\dst\sqrt{2\pi}\sum_{m=1}^\infty \beta_mJ_m(r)\cos (m\theta+\theta_m)\\[15pt]
v(r\theta)&=&\dst\sqrt{2\pi}\sum_{m=1}^\infty \kappa_m\beta_mJ_m(r)\cos (m\theta+\theta_m)
\end{matrix}
\right.
\end{equation}
where $\beta_m=\widehat{u}(m)e^{-i\theta_m}$ and for every $m,n$
for which $\widehat{u}(m)\widehat{u}(n)\not=0$, 
$$
\mbox{either}\quad\kappa_m=\kappa_n\quad\mbox{or}\quad\kappa_m=\frac{\overline{\beta_m}}{\beta_m}\frac{\beta_n}{\overline{\beta_n}}\kappa_n.
$$

--- or there exists exactly at least one $m$ that is not of type $R$ and at least one $m$ of type $R$ (thus of type R and C)
and then $\kappa_m=\kappa_n:=\kappa$ for every $m,n$ and
\begin{multline}
\label{eq:mixt1}
v(r\theta)=\kappa\sqrt{2\pi} \left(\sum_{m\mbox{ of type I not R}}J_m(r)\bigl(\hat u(-m)e^{-im\theta}+\hat u(m)e^{im\theta}\bigr)\right.\\
+\sum_{m\mbox{ of type R}}\hat u(-m)e^{-i\theta_m}J_m(r)\cos(m\theta+\theta_m)\\
+\left.\sum_{m\mbox{ of type C not R}}
J_m(r)\overline{\bigl(\hat u(-m)e^{-im\theta}+\hat u(m)e^{im\theta}\bigr)}\right);
\end{multline}

--- or no $m$ is of type R and then there exists $\kappa,\tilde\kappa$ such that
\begin{multline}
\label{eq:mixt2}
v(r\theta)=\kappa\sqrt{2\pi} \sum_{m\mbox{ of type }I}J_m(r)\bigl(\hat u(-m)e^{-im\theta}+\hat u(m)e^{im\theta}\bigr)\\
+\tilde\kappa\sum_{m\mbox{ of type }C}J_m(r)\overline{\bigl(\hat u(-m)e^{-im\theta}+\hat u(m)e^{im\theta}\bigr)}.
\end{multline}

We will now show that in the two last cases one of the type I or type C sums in \eqref{eq:mixt1}-\eqref{eq:mixt2} is empty.
This follows from the following lemma and the fact that $\Re c_{m,n}(v)=\Re c_{m,n}(u)$ with $m$
of type I but not R and $n$ of type C but not R:

\begin{lemma}
\label{lem:mixt}
Let $m\not=n\in\N$.
Let $a,b,c,d\in\C$ be such that $(a,b)\not=(0,0)$ and $(c,d)\not=(0,0)$.
Let $\kappa,\kappa'\in\C$ with $|\kappa|=|\kappa'|=1$.
If
\begin{multline*}
\Re\bigl(\kappa'\bar\kappa(ae^{im\theta}+be^{-im\theta})\overline{(ce^{in\theta}+de^{-in\theta})}\bigr)\\
=\Re\bigl((ae^{im\theta}+be^{-im\theta})(ce^{in\theta}+de^{-in\theta})\bigr)
\end{multline*}
for every $\theta\in\R$ then $|c|=|d|$.
\end{lemma}

We postpone the proof of the lemma to the end of the proof of the theorem.
Let us first conclude with the first case \eqref{eq:pure}. Let $m_0=\inf\{m\,:\beta_m\not=0\}$ and let $\tilde\kappa_{m_0}=\frac{\beta_{m_0}}{\overline{\beta_{m_0}}}\kappa_{m_0}$. We may then write $\mm_0:=\{m\,:\beta_m\not=0,\kappa_m=\kappa_{m_0}\}$,
$\mm_1:=\{m\,:\beta_m\not=0,\kappa_m\not=\kappa_{m_0}\}$.
If $\mm_1=\emptyset$ then $g=\kappa_{m_0}f$ otherwise let $m_1=\min\mm_1$.
Then for every $m\in\mm_1$, $\kappa_m\beta_m=\tilde\kappa_{m_0}\overline{\beta_m}$.
Further, if $n\in\mm_0$ and $m\in\mm_1$ then $\kappa_n\not=\kappa_m$ thus
$$
\kappa_n\beta_n=\overline{\beta_n}\frac{\beta_m\kappa_m}{\overline{\beta_m}}=\tilde\kappa_{m_0}\overline{\beta_n}.
$$
But then
\begin{eqnarray*}
g(\theta)&=&\kappa_{m_0}\sum_{m\in\mm_0}\beta_m\cos(m\theta+\theta_m)
+\tilde\kappa_{m_0}\sum_{m\in\mm_1}\overline{\beta_m}\cos(m\theta+\theta_m)\\
&=&\tilde\kappa_{m_0}\sum_{m\geq 1}\overline{\beta_m}\cos(m\theta+\theta_m)=\tilde\kappa_{m_0}\overline{f}.
\end{eqnarray*}

\end{proof}

\begin{proof}[Proof of Lemma \ref{lem:mixt}] We will use the fact that two complex numbers of same modulus
and same real part are either equal of conjugate of one an other. Therefore

--- either 
\begin{multline}
\label{eq:2dcas1}
\kappa'\bar\kappa(ae^{im\theta}+be^{-im\theta})\overline{(ce^{in\theta}+de^{-in\theta})}\\
=(ae^{im\theta}+be^{-im\theta})(ce^{in\theta}+de^{-in\theta})
\end{multline}

-- or
\begin{multline}
\label{eq:2dcas2}
\kappa'\bar\kappa(ae^{im\theta}+be^{-im\theta})\overline{(ce^{in\theta}+de^{-in\theta})}\\
=\overline{(ae^{im\theta}+be^{-im\theta})(ce^{in\theta}+de^{-in\theta})}
\end{multline}

Note that, by analycity, if one of the alternatives holds for a set of positive measure of $\theta$'s, then it holds
for every $\theta$. Further, up to exchanging the roles of the two factors, \eqref{eq:2dcas2} reduces to \eqref{eq:2dcas1}.
But, expanding the factors, we see that this equation is equivalent to
$$
\begin{matrix}
(i)& \kappa'\bar\kappa a\bar d&=&ac&(ii)&\kappa'\bar\kappa a\bar c&=&ad\\
(iii)&\kappa'\bar\kappa b\bar c&=&bd&(iv)&\kappa'\bar\kappa b\bar d&=&bc.
\end{matrix}
$$
Now, if $c=0$ then $d\not=0$ and (i) implies that $a=0$ while (iv) implies $b=0$ which is excluded.
Using (ii) and (iii) we can also exclude $d=0$. 

Further (i),(ii) imply that $|a||c|=|a||d|$ while (iii),(iv) imply $|b||c|=|b||d|$.
As one of $a,b\not=0$ we get $|c|=|d|$ which was excluded.
\end{proof}

\section*{Acknowledgments}

This study has been carried out with financial support from the French State, managed
by the French National Research Agency (ANR) in the frame of the ”Investments for
the future” Programme IdEx Bordeaux - CPU (ANR-10-IDEX-03-02).

P.J. acknowledges financial support from the French ANR program ANR-12-BS01-0001 (Aventures)
from the Austrian-French AMADEUS project 35598VB - ChargeDisq and from the Tunisian-French CMCU/Utique project 15G1504.

S.P.E. acknowledges financial support from the Mexican Grant PAPIIT-UNAM IN102915.

\end{document}